\numberwithin{equation}{section}
\theoremstyle{plain}
\newtheorem{theorem}[subsection]{Theorem}
\newtheorem{proposition}[subsection]{Proposition}
\newtheorem{lemma}[subsection]{Lemma}
\newtheorem{corollary}[subsection]{Corollary}
\theoremstyle{definition}
\newtheorem{definition}[subsection]{Definition}
\newtheorem{remark}[subsection]{Remark}
\newcommand\eps{\varepsilon}
\newcommand\R{{\mathbf{R}}}
\newcommand\HH{{\mathbb{H}}}
\newcommand\I{{\mathcal{I}}}
\newcommand\C{{\mathbb{C}}}
\newcommand{\beq}{\begin{equation*}}
\newcommand{\eeq}{\end{equation*}}
\begin{document}

\title{An incidence theorem in higher dimensions}

\author{J\'ozsef Solymosi}
\address{Department of Mathematics, University of British Columbia}
\email{solymosi@math.ubc.ca }

\author{Terence Tao}
\address{UCLA Department of Mathematics, Los Angeles, CA 90095-1555}
\email{tao@math.ucla.edu}

\subjclass{52C10, 32S22}

\begin{abstract} We prove almost tight bounds on the number of incidences between points and $k$-dimensional varieties of bounded degree in $\R^d$.  Our main tools are the polynomial ham sandwich theorem and induction on both the dimension and the number of points.
\end{abstract}

\maketitle

\section{Introduction}

Given a collection $P$ of points in some space, and a collection $L$ of sets in that same space, let $I(P,L) := \{ (p,\ell) \in P \times L: p \in \ell \}$ be the set of incidences.  One of the objectives in combinatorial incidence geometry is to obtain good bounds on the cardinality $|I(P,L)|$ on the number of incidences between finite collections $P, L$, subject to various hypotheses on $P$ and $L$.  For instance, we have the classical result of Szemer\'edi and Trotter \cite{SzT}:

\begin{theorem}[Szemer\'edi-Trotter theorem]\label{szt-thm}\cite{SzT}  Let $P$ be a finite set of points in $\R^d$ for some $d \geq 2$, and let $L$ be a finite set of lines in $\R^d$.  Then
\begin{equation}\label{szt-bound}
|I(P,L)| \leq C( |P|^{2/3} |L|^{2/3} + |P| + |L| )
\end{equation}
for some absolute constant $C$.
\end{theorem}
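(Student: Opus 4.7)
My first step would be to reduce to $d=2$. A generic linear projection $\pi\colon\R^d \to \R^2$ preserves all incidences and sends distinct points of $P$ to distinct points and distinct lines of $L$ to distinct lines, since only finitely many projection directions are bad. Thus $|I(P,L)| = |I(\pi(P),\pi(L))|$, and it suffices to prove \eqref{szt-bound} in $\R^2$.

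\textbf{Polynomial cell decomposition.} For the planar statement I would argue using the polynomial method underlying this paper. Fix a degree parameter $D$. By polynomial partitioning (a standard consequence of polynomial ham sandwich), there exists $p \in \R[x,y]$ with $\deg p \le D$ whose zero set $Z(p)$ divides $\R^2 \setminus Z(p)$ into $O(D^2)$ open cells, each containing $O(|P|/D^2)$ points of $P$. I split $I(P,L)$ into on-$Z(p)$ and off-$Z(p)$ incidences. By B\'ezout, a line not contained in $Z(p)$ meets it in at most $D$ points, yielding $O(D|L|)$ on-set incidences, while the at most $D$ lines lying inside $Z(p)$ I would remove at the outset. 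Within a single cell with $n_i$ points and $m_i$ lines, Cauchy--Schwarz combined with the trivial bound $\sum_\ell \binom{|P_i\cap\ell|}{2} \le \binom{n_i}{2}$ (two points determine at most one line) gives $I_i \lesssim m_i + m_i^{1/2} n_i$. Summing over cells with $\sum_i m_i \le (D+1)|L|$ and a second Cauchy--Schwarz yields an off-set total of $O(D|L| + |P||L|^{1/2}D^{-1/2})$.

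\textbf{Balancing and main obstacle.} Choosing $D \sim |P|^{2/3}|L|^{-1/3}$ balances the two contributions and delivers $|I(P,L)| = O(|P|^{2/3}|L|^{2/3})$. The additive $|P| + |L|$ in \eqref{szt-bound} cover the regimes where this $D$ falls outside $[1,|P|^{1/2}]$, namely $|P| > |L|^2$ and $|P| < |L|^{1/2}$; there the direct Cauchy--Schwarz bound $|I| \lesssim |P|^{1/2}|L| + |P|$ (or its symmetric twin) dominates. The main technical point I expect to fight is the bookkeeping for the lines lying inside $Z(p)$: there are at most $D$ of them, but each could in principle carry $|P|$ incidences, so one must either invoke projective duality to reduce to the case $|P| \ge |L|$ (where $D|P|$ is absorbed into the main term) or run a short iterative argument peeling off these lines. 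Once this is handled, the remainder of the proof is routine optimization.
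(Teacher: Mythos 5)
Your proposal is correct, but note that the paper does not prove this theorem at all: it cites \cite{SzT} for the planar case and only indicates (in Section \ref{large-sec}) the generic-projection reduction from $\R^d$ to $\R^2$, which is exactly your first step. For the planar case you substitute a polynomial-partitioning proof in the style of \cite{kaplan}, whereas the paper's own machinery (Theorem \ref{main}) would only recover the bound with an $\eps$ loss; so your route is genuinely different from, and stronger than, what the paper's methods give, and it is a legitimate self-contained proof of the classical statement. Your cell computation is right: $\sum_i m_i^{1/2} n_i \le (|P|/D^2)\cdot O(D)\cdot (D|L|)^{1/2}$ gives $O(|P||L|^{1/2}D^{-1/2})$, and balancing against $O(D|L|)$ at $D \sim |P|^{2/3}|L|^{-1/3}$ yields the main term, with the degenerate regimes $|L|>|P|^2$ and $|P|>|L|^2$ absorbed by the K\H{o}v\'ari--S\'os--Tur\'an bounds. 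The one point you flag as an obstacle --- the at most $D$ lines contained in $Z(p)$ --- is handled more simply than either of your suggested fixes: if $L_0 \subset Z(p)$ with $|L_0| \le D$, then each line of $L_0$ contains at most $|L_0|-1$ points that also lie on another line of $L_0$, so $|I(P,L_0)| \le |P| + |L_0|^2 \le |P| + D^2$, and $D^2 \sim |P|^{4/3}|L|^{-2/3} \le |P|$ precisely in the regime $|L| \ge |P|^{1/2}$ where the partitioning argument is being run. With that observation inserted, your proof is complete.
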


This theorem is usually stated in the two-dimensional setting $d=2$, but the higher-dimensional case is an immediate consequence by applying a generic projection from $\R^d$ to $\R^2$ (see Section \ref{large-sec} for a discussion of this argument).  It is known that this bound is sharp except for the constant $C$; see \cite{SzT}.

Various mathematical problems can be transformed to a question about incidence bounds of Szemer\'edi-Trotter type.  For instance, Elekes \cite{E2} used the above theorem in an unexpected fashion to obtain new bounds on the sum-product problem. In the 1990s, Wolff \cite{wolff} observed that bounds on the number of incidences might be used in problems related to the Kakeya conjecture, one of the central
conjectures in harmonic analysis.  Bennett, Carbery and Tao \cite{BCT} established a connection between multilinear Kakeya estimates and bounds on number of incidences between points and lines in three dimensions. Very recently, Guth and Katz \cite{GK} used bounds on the number of incidences between points and lines in three dimensions as part of their solution to the Erd\H os Distinct Distances Problem. They used an important tool, the so-called polynomial ham sandwich theorem. This theorem will be a crucial part of this paper as well. The applicability of the polynomial ham
sandwich theorem to Szemer\'edi-Trotter type theorems was also recently emphasized in \cite{kaplan}.

Further applications of Szemer\'edi-Trotter type incidence bounds in mathematics and theoretical computer science, as well as several open problems, are discussed in the surveys and books of Elekes \cite{E1}, Sz\'ekely \cite{Szek}, Pach and Sharir \cite{PS}, Brass, Moser, and Pach \cite{BMP}, and Matou\v{s}ek \cite{Ma}.

In the survey \cite{E1}, Elekes listed some nice applications of point-line incidence bounds in  the \emph{complex} plane $\C^2$, where the lines are now \emph{complex} lines (and thus are also real planes). In this paper he referred to a (then) recent result of T\'oth \cite{To} which proves the point-line incidence bound \eqref{szt-bound} in this situation (with a different constant $C$).  Up to this constant multiplier, this bound is optimal. 
Our argument is different from and simpler than the one in T\'oth's paper; however, the bounds in this paper are slightly weaker than those in \cite{To}.

The main goal of our paper is to establish near-sharp Szemer\'edi-Trotter type bounds on the number of incidences between points and $k$-dimensional algebraic varieties in $\R^d$ for various values of $k$ and $d$, under some ``pseudoline'' hypotheses on the algebraic varieties; see Theorem \ref{main} for a precise statement.  In particular, we obtain near-sharp bounds for point-line incidences in $\C^2$, obtaining a ``cheap'' version of the result of T\'oth mentioned previously.

Our argument is based on the ``polynomial method'' as used by Guth and Katz \cite{GK}, combined with an induction on the size of the point set $P$.  The inductive nature of our arguments causes us to lose an arbitrarily small epsilon term in the exponents, but the bounds are otherwise sharp.

As in \cite{GK}, our arguments rely on an efficient cell decomposition provided to us by the polynomial ham sandwich theorem
(see Corollary \ref{cell-decomp}).  However, the key innovation here, as compared to the arguments in \cite{GK}, is that this
decomposition will only be used to partition the point set into a \emph{bounded} number of cells, rather than a large number of cells. (Similar recursive space partitioning techniques were used by Agarwal and Sharir \cite{agshar}.)
This makes the contribution of the cell boundaries much easier to handle (as they come from varieties of bounded degree, rather than large degree).
The price one pays for using this milder cell decomposition is that the contribution of the cell interiors can no longer be handled by ``trivial''
bounds.  However, it turns out that one can use a bound coming from an induction hypothesis as a substitute for the trivial bounds, so long
as one is willing to concede an epsilon factor in the inductive bound.  This technique appears to be quite general, and suggests that one can use
induction to significantly reduce the need for quantitative control of the geometry of high-degree algebraic varieties when applying the polynomial
method to incidence problems, provided that one is willing to lose some epsilons in the final bounds.

Our results have some similarities with existing results in the literature; we discuss these connections in Section \ref{comparison}.

\subsection{Notation}

We use the usual asymptotic notation $X = O(Y)$ or $X \ll Y$ to denote the estimate $X \leq CY$ for some absolute constant $C$.  If we need the implied constant $C$ to depend on additional parameters, we indicate this by subscripts, thus for instance $X = O_d(Y)$ or $X \ll_d Y$ denotes the estimate $X \leq C_d Y$ for some quantity $C_d$ depending on $d$.

\subsection{Acknowledgements}

The authors are very grateful to Boris Bukh, Nets Katz, Jordan Ellenberg,
and Josh Zahl for helpful discussions and to the Isaac Newton Institute, Cambridge for hospitality while this research was being conducted. We also thank Alex Iosevich, Izabella {\L}aba, Ji\v{r}\'{\i} Matou\v{s}ek, J\'anos Pach, and Miguel Walsh for comments, references, and corrections to an earlier draft of this manuscript. We are thankful for the referee for the careful reading and for the suggestions to improve the readability of the paper.
The first author is supported by an NSERC grant and the second author is supported by a grant from the MacArthur Foundation, by NSF grant DMS-0649473, and by the NSF Waterman award.

\section{Main theorem}

In what follows we are going to use some standard notations and definitions from algebraic geometry. In Section \ref{alg-geom} we provide the basic definitions and tools we will need from algebraic geometry, although this is only the barest of introductions and we refer the reader to standard textbooks like \cite{harris}, \cite{griffiths}, \cite{Mumford}, \cite{hartshorne} (or general reference works such as \cite{gowers}, \cite{springer}) for a detailed treatment.   

Our main result (proven in Section \ref{main-proof}) is as follows.

\begin{theorem}[Main theorem]\label{main}  Let $k, d \geq 0$ be integers such that $d \geq 2k$, and let $\eps > 0$ and $C_0 \geq 1$ be real numbers.  Let $P$ be a finite collection of distinct points in $\R^d$, let $L$ be a finite collection of real algebraic varieties in $\R^d$, and let $\I \subset I(P,L)$ be a set of incidences between $P$ and $L$.  Assume the following ``pseudoline-type'' axioms:
\begin{itemize}
\item[(i)]  For each $\ell \in L$, $\ell$ is a real algebraic variety, which is the restriction to $\R^d$ of a complex algebraic variety $\ell_\C$ of dimension $k$ and degree at most $C_0$.
\item[(ii)]  If $\ell, \ell' \in L$ are distinct, then there are at most $C_0$ points $p$ in $P$ such that $(p,\ell), (p,\ell') \in \I$.
\item[(iii)]  If $p, p' \in P$ are distinct, then there are at most $C_0$ varieties $\ell$ in $L$ such that $(p,\ell), (p',\ell) \in \I$.  (Note that for $C_0=1$, this is equivalent to (ii).)
\item[(iv)]  If $(p,\ell) \in \I$, then $p$ is a smooth (real) point of $\ell$, with a real tangent space.  In other words, for each $(p,\ell) \in \I$, there is a unique tangent space $T_p \ell$ of $\ell$ at $p$, which is a $k$-dimensional real affine space containing $p$.
\item[(v)]  If $\ell,\ell' \in L$ are distinct, and $p \in P$ are such that $(p,\ell), (p,\ell') \in \I$, then the tangent spaces $T_p \ell$ and $T_p \ell'$ are transverse, in the sense that they only intersect at $p$.
\end{itemize}
Then one has
\begin{equation}\label{ipl} |\I| \leq A |P|^{\frac{2}{3}+\eps} |L|^{\frac{2}{3}} + \frac{3}{2} |P| + \frac{3}{2} |L|
\end{equation}
for some constant $A = A_{k,\eps,C_0}$ that depends only on the quantities $k,\eps,C_0$.
\end{theorem}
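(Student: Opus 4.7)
The plan is a double induction: outer induction on the ambient dimension $d$ (with $k$ held fixed, subject to $d\geq 2k$) and inner induction on $|P|$, where the $\eps$ slack in the exponent of $|P|$ in \eqref{ipl} is designed precisely to absorb the error accumulating at each step. The inner base case handles small $|P|$ (after choosing $A=A_{k,\eps,C_0}$ large, using axioms (ii),(iii) to estimate incidences crudely); the outer base case $d=2k$ is treated separately, for instance by a generic projection argument onto $\R^{2k}$. Throughout one may assume $|\I|\geq \tfrac{3}{2}|P|+\tfrac{3}{2}|L|$, else \eqref{ipl} is immediate.

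For the inductive step I apply the polynomial ham sandwich theorem (Corollary \ref{cell-decomp}) with a \emph{bounded} degree $D=D(\eps,k,C_0)$, to be chosen large at the end, producing a polynomial $Q$ of degree $D$ whose real zero set $Z=Z(Q)\subset\R^d$ partitions $\R^d\setminus Z$ into $O_D(1)$ open cells, each containing at most $|P|/M$ points of $P\setminus Z$, for some $M=M(D)\to\infty$ with $D$. For an incidence $(p,\ell)\in\I$ with $p$ in an open cell $\Omega$ and $\ell\not\subset Z$, a Bezout-type estimate shows that each such $\ell$ meets only $O_{D,k,C_0}(1)$ cells; the restricted subsystem in each $\Omega$ trivially inherits axioms (i)--(v), so the inner induction hypothesis applied on each cell, summed over cells, yields a total contribution bounded by $O_D(1)\cdot A(|P|/M)^{2/3+\eps}|L|^{2/3} = O_D(M^{-\eps})\,A|P|^{2/3+\eps}|L|^{2/3}$, which is $\leq \tfrac{1}{2}A|P|^{2/3+\eps}|L|^{2/3}$ once $D$ (hence $M$) is sufficiently large.

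The remaining incidences involve $p\in P\cap Z$. These I handle by the outer induction on $d$: the variety $Z$ has dimension $\leq d-1$ and degree $O(D)$, so after decomposing $Z$ into its bounded-degree irreducible components and applying (if needed) a generic projection to $\R^{d-1}$, the subsystem $(P\cap Z, L, \I\cap(P\cap Z)\times L)$ is essentially an instance of the theorem one dimension lower, with the pseudoline axioms surviving the restriction after excising a bounded exceptional set of points per variety (failures of smoothness/transversality, absorbable into the $\tfrac{3}{2}|P|+\tfrac{3}{2}|L|$ term); varieties $\ell\not\subset Z$ that still contribute here do so through $\ell\cap Z$, a lower-dimensional incidence problem handled the same way or by Bezout.

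The principal obstacle is the bookkeeping that closes the induction: the constant $A=A_{k,\eps,C_0}$ must be chosen large enough to dominate the $O_{D,k,C_0}(|L|)$ and $O_{D,k,C_0}(|P|)$ contributions from the boundary step and from the dimension reduction, while simultaneously the gain $M^{-\eps}$ from the cell interiors must beat the $O_D(1)$ cell count. Both the $\eps$ loss in the exponent of $|P|$ and the dimension hypothesis $d\geq 2k$ (which guarantees room for the outer induction to reduce $d$ while still respecting the constraint) are imposed precisely so that this numerology closes; a secondary subtlety is tracking the pseudoline axioms (iv) and (v) through the generic projection, which is a standard but not entirely trivial algebraic-geometry verification.
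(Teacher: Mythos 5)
Your overall architecture---a bounded-degree polynomial cell decomposition, an inner induction on $|P|$ that exploits the $\eps$ in the exponent to absorb the $O_D(1)$ cell count, trivial (K\H{o}v\'ari--S\'os--Tur\'an type) bounds to restrict to the regime $|L| \gg |P|^{1/2}$ and $|L| \ll |P|^2$, and a generic projection to handle $d>2k$---matches the paper's proof. But there is a genuine gap in how you close the induction on the boundary contribution, and it is located exactly at the crux of the argument.

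You propose to handle the incidences with $p \in P \cap Z$ by an ``outer induction on $d$ with $k$ held fixed,'' viewing $Z$ as an ambient space of dimension $d-1$ and invoking ``the theorem one dimension lower.'' In the main case $d=2k$ this cannot work: the theorem one dimension lower would be an instance with ambient dimension $2k-1$ and varieties still of dimension $k$, which violates the standing hypothesis $d \geq 2k$; indeed two transverse $k$-dimensional tangent spaces cannot coexist inside a $(2k-1)$-dimensional tangent space, so Axiom (v) is vacuously unsatisfiable there and no induction hypothesis is available. The correct quantity to decrease is not the ambient dimension $d$ but the dimension $k$ of the varieties: the paper inducts on $d+k$, and on the hypersurface $\{Q=0\}$ (more generally on an $r$-dimensional variety $\Sigma$ with $r<2k$) it splits $L$ into (a) varieties $\ell \subset \Sigma$, which by the transversality axiom can contribute at most one incidence per point of $P\cap\Sigma$ and hence only a linear term, and (b) varieties $\ell \not\subset \Sigma$, for which $\ell \cap \Sigma$ is an algebraic set of dimension $k'<k$; after decomposing into smooth points of bounded-degree components and checking that Axioms (i)--(v) are inherited with a larger $C_0$, one applies the theorem with $k$ replaced by $k'$ (same $d=2k$), which is available since $d+k'<d+k$. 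Your phrase ``handled the same way or by Bezout'' does not supply this: B\'ezout alone bounds the complexity of $\ell\cap Z$, not the number of point--variety incidences on it, and ``the same way'' presupposes an induction hypothesis your scheme does not furnish. Relatedly, you never address the varieties lying entirely inside $Z$ (case (a) above), and your claim that failures of smoothness can be excised as ``a bounded exceptional set of points per variety'' is not right as stated---the singular locus of $\Sigma$ is a positive-dimensional variety that can contain many points of $P$, and must itself be handled by recursion on its dimension (as in the paper's Proposition on $r$-dimensional $\Sigma$, using the quantitative smooth-point decomposition).
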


\begin{remark}\label{com}  The condition $d \geq 2k$ is natural, as we expect the tangent spaces $T_p \ell$, $T_p \ell'$ in Axiom (v) to be $k$-dimensional\footnote{It is possible for these real tangent spaces to have dimension less than $k$, because they are the restriction of the $k$-dimensional \emph{complex} tangent spaces $T_p \ell_\C$, $T_p \ell'_\C$ to $\R^d$.  In applications, the complex tangent spaces will be complexifications of the real tangent spaces, which are then necessarily $k$-dimensional; see Proposition \ref{complexity}.}; if $d < 2k$, such spaces cannot be transverse in $\R^d$.  As we will see shortly, the most interesting applications occur when $d=2k$ and $k \geq 1$, with $C_0$ being an extremely explicit constant such as $1$, $2$, or $4$, the varieties in $L$ being smooth (e.g. lines, planes, or circles), and the incidences $\I$ comprising all of $I(P,L)$; but for inductive reasons it is convenient to consider the more general possibilities for $d$, $k$, $C_0$, $L$, and $\I$ allowed by the above theorem.    The constants $\frac{3}{2}$ could easily be replaced in the argument by any constant greater than $1$.  We choose the constant here to be less than $2$ so that the bound \eqref{ipl} can be used to control $r$-rich points and lines for $r$ as low as $2$ (though in that particular case, the trivial bounds in Lemma \ref{triv-lem} already suffice).
\end{remark}

In Section \ref{special} we will sketch a simplified special case of the above theorem which can be proven using less of the machinery from algebraic geometry; it is conceivable that many of our applications can be handled by this simpler method. On the other hand we expect that there are further applications where the whole generality of our result is needed.

\subsection{Applications}

Suppose we specialize Theorem \ref{szt-thm} to the case when the varieties in $L$ are $k$-dimensional affine subspaces, such that any two of these subspaces meet in at most one point.  Then one easily verifies that Axioms (i)-(v) hold with $C_0=1$ and $\I := I(P,L)$.  We conclude:

\begin{corollary}[Cheap Szemer\'edi-Trotter for $k$-flats]\label{flat}  Let $\eps > 0$, $k \geq 1$, and $d \geq 2k$.  Then there exists a constant $A = A_{\eps,k} > 0$ such that
\begin{equation}\label{claim-flat}
 |I(P,L)| \leq A |P|^{2/3+\eps} |L|^{2/3} + \frac{3}{2}|P| + \frac{3}{2} |L|
\end{equation}
whenever $P$ is a finite set of points in $\R^d$, and $L$ is a finite set of $k$-dimensional affine subspaces in $\R^d$, such that any two distinct spaces in $L$ intersect in at most one point.
\end{corollary}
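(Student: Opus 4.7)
The plan is to apply Theorem \ref{main} with parameter $C_0 = 1$ and incidence set $\I := I(P,L)$, so that the entire task reduces to verifying the five ``pseudoline-type'' axioms (i)--(v) for the present configuration of points and $k$-flats. Once this verification is complete, the bound \eqref{claim-flat} is just the specialization of \eqref{ipl}, with $A_{\eps,k} := A_{k,\eps,1}$.

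For axiom (i), each $\ell \in L$ is a $k$-dimensional real affine subspace of $\R^d$, cut out by $d-k$ real affine-linear equations. Its complexification $\ell_\C$ is the complex zero set of the same equations, which is a $k$-dimensional complex affine subspace of $\C^d$ and hence a complex algebraic variety of dimension $k$ and degree $1$. So $C_0 = 1$ is admissible here. Axiom (iv) is immediate: an affine subspace is smooth at every one of its points, and at $p \in \ell$ the tangent space $T_p\ell$ coincides as a set with $\ell$ itself, which is a $k$-dimensional real affine space containing $p$.

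Axioms (ii), (iii) and (v) all follow directly from the hypothesis that any two distinct $\ell,\ell' \in L$ intersect in at most one point. Axiom (ii) is literally this hypothesis (with $C_0 = 1$). For axiom (iii), observe that if two distinct $k$-flats $\ell,\ell' \in L$ both passed through two distinct points $p,p' \in P$, then $\ell \cap \ell'$ would be an affine subspace containing $\{p,p'\}$, hence of positive dimension, contradicting the hypothesis; so at most one $\ell$ contains both $p$ and $p'$. For axiom (v), if $\ell,\ell' \in L$ are distinct and both contain $p \in P$, then $T_p\ell = \ell$ and $T_p\ell' = \ell'$ as sets, and the hypothesis forces $T_p\ell \cap T_p\ell' = \{p\}$, which is the desired transversality.

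There is essentially no obstacle in this argument: the only point that merits attention is axiom (i), where one must observe that complexifying a real affine $k$-flat yields a genuinely $k$-dimensional complex variety of degree $1$ (rather than something of the wrong dimension or degree), which is clear from its linear defining equations. With all five axioms verified, Theorem \ref{main} applies and produces the stated bound with constant $A = A_{k,\eps,1}$.
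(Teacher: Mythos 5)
Your proposal is correct and is exactly the paper's argument: the paper derives Corollary \ref{flat} by observing that for $k$-flats meeting pairwise in at most one point, Axioms (i)--(v) of Theorem \ref{main} hold with $C_0=1$ and $\I := I(P,L)$, and you have simply written out that verification in full (correctly, including the point that the complexified flat is a degree-one $k$-dimensional variety whose real tangent space at each point is the flat itself). Nothing is missing.
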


Except for the $\eps$ loss, this answers a conjecture of T\'oth \cite[Conjecture 3]{To} affirmatively.  The hypothesis $d \geq 2k$ can be dropped for the trivial reason that it is no longer possible for the $k$-dimensional subspaces in $L$ to intersect each other transversely for $d < 2k$, but of course the result is not interesting in this regime.

If $r \geq 2$, and $L$ is a collection of $k$-dimensional affine subspaces, define an \emph{$r$-rich point} to be a point that is incident to at least $r$ subspaces in $L$.  If we apply \eqref{claim-flat} to the set $P$ of $r$-rich points in a standard manner, we have
$$ r |P|\leq |I(P,L)| \leq A |P|^{2/3+\eps} |L|^{2/3} + \frac{3}{2}|P| + \frac{3}{2} |L|.$$
As $r \geq 2>\frac{3}{2}$, we may absorb the $\frac{3}{2} |P|$ term onto the left-hand side, and conclude that if $L$ is any finite collection $k$-dimensional affine subspaces in $\R^d$, any two of which intersect in at most one point, then the number of $r$-rich points is $O_\eps( \frac{|L|^{2+\eps}}{r^3} + \frac{|L|}{r} )$ for any $\eps > 0$ and $r \geq 2$.

As special cases of Corollary \ref{flat}, we almost recover (but for epsilon losses) the classic Szemer\'edi-Trotter theorem (Theorem \ref{szt-thm}), as well as the complex Szemer\'edi-Trotter theorem of T\'oth \cite{To}.  More precisely, from the $k=2$ case of Corollary \ref{flat}, we have

\begin{corollary}[Cheap complex Szemer\'edi-Trotter]\label{complex}  Let $\eps > 0$ and $d \geq 2$.  Then there exists a constant $A = A_\eps > 0$ such that
\begin{equation}\label{claim}
 |I(P,L)| \leq A |P|^{2/3+\eps} |L|^{2/3} + \frac{3}{2} |P| + \frac{3}{2} |L|
\end{equation}
whenever $P$ is a finite set of points in $\C^2$, and $L$ is a finite set of complex lines in $\C^2$.
\end{corollary}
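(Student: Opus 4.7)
The plan is to deduce Corollary \ref{complex} as a direct specialization of Corollary \ref{flat} at $k=2$, by realizing complex lines in $\C^2$ as real $2$-flats in $\R^4$ that pairwise meet in at most one point.

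First I would fix the standard $\R$-linear identification $\C^2 \cong \R^4$ sending $(z,w) = (x_1+ix_2,\, y_1+iy_2)$ to $(x_1,x_2,y_1,y_2)$. Under this identification, any complex line $\ell = \{(z,w) \in \C^2 : az+bw = c\}$ (with $(a,b) \neq (0,0)$) becomes the common zero set in $\R^4$ of the two real affine equations obtained from the real and imaginary parts of $az+bw-c$. Since $(a,b) \neq 0$, these two real equations are $\R$-linearly independent, so $\ell$ is cut out as a real affine subspace of codimension $2$, i.e., a $2$-dimensional affine subspace of $\R^4$. Thus a finite collection $L$ of complex lines in $\C^2$ transports to a finite collection $\tilde L$ of $2$-dimensional real affine subspaces of $\R^4$, and the incidences are preserved: $|I(P,L)| = |I(\tilde P, \tilde L)|$ where $\tilde P \subset \R^4$ is the image of $P$.

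Next, I would verify the pairwise-intersection hypothesis needed for Corollary \ref{flat}. If $\ell, \ell'$ are two distinct complex lines, then over $\C$ the system of two $\C$-linear equations defining them either has no common solution (parallel case) or exactly one common solution (transverse case); in particular, $|\ell \cap \ell'| \leq 1$. Under the identification with $\R^4$ this immediately yields $|\tilde \ell \cap \tilde \ell'| \leq 1$, which is exactly the transversality hypothesis of Corollary \ref{flat}.

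With these checks in place, I would simply invoke Corollary \ref{flat} in dimension $d = 4 = 2k$ with $k=2$, applied to $\tilde P$ and $\tilde L$: this produces a constant $A = A_{\eps,2}$ and the bound
\begin{equation*}
|I(P,L)| = |I(\tilde P, \tilde L)| \leq A |\tilde P|^{2/3+\eps} |\tilde L|^{2/3} + \tfrac{3}{2}|\tilde P| + \tfrac{3}{2}|\tilde L| = A |P|^{2/3+\eps}|L|^{2/3} + \tfrac{3}{2}|P| + \tfrac{3}{2}|L|,
\end{equation*}
which is exactly \eqref{claim}. No actual obstacle arises here: the only content is the two geometric observations above (complex lines are real $2$-flats, and two distinct complex lines meet in at most one point), both of which are standard. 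One minor point worth flagging is that Corollary \ref{complex} as stated allows $d \geq 2$, whereas the reduction uses $d=4$; but the case of complex lines in $\C^2$ is what is actually asserted, so this matches the hypothesis of Corollary \ref{flat} exactly.
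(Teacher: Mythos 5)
Your proposal is correct and is essentially the paper's own derivation: the paper obtains Corollary \ref{complex} precisely as the $k=2$ case of Corollary \ref{flat}, using the identification $\C^2 \cong \R^4$ under which complex lines become $2$-flats meeting pairwise in at most one point. The additional standalone argument in Section \ref{special} is only a motivational sketch, so your route matches the intended proof.
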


We will sketch a separate proof of this corollary in Section \ref{special}, in order to motivate the more complicated argument needed to establish Theorem \ref{szt-thm} in full generality.

One can also establish the same bound for the quaternions $\HH$.  Define a \emph{quarternionic line} to be any set in $\HH^2$ of the form $\{ (a,b) + t(c,d): t \in \HH \}$ for some $a,b,c,d \in \HH$ with $(c,d) \neq (0,0)$.  Because $\HH$ is a division ring, we see that any two distinct quarternionic lines meet in at most one point.  After identifying $\HH$ with $\R^4$, we conclude:

\begin{corollary}[Cheap quaternionic Szemer\'edi-Trotter]  Let $\eps > 0$.  Then there exists a constant $A = A_\eps > 0$ such that
\begin{equation}\label{claim-quaternion}
 |I(P,L)| \leq A |P|^{2/3+\eps} |L|^{2/3} + \frac{3}{2} |P| + \frac{3}{2} |L|
\end{equation}
whenever $P$ is a finite set of points in $\HH^2$, and $L$ is a finite set of quaternionic lines in $\HH^2$.
\end{corollary}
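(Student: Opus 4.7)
The plan is to derive this as a direct consequence of Corollary~\ref{flat} with parameters $k=4$ and $d=8$, exactly parallel to the derivation of Corollary~\ref{complex} (which uses $k=2$ and $d=4$). The key point is that once we identify $\HH$ with $\R^4$ (and hence $\HH^2$ with $\R^8$), each quaternionic line becomes a $4$-dimensional real affine subspace of $\R^8$, and the division-ring structure of $\HH$ will guarantee the intersection hypothesis needed in Corollary~\ref{flat}.

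First I would set up the identification $\HH^2 \cong \R^8$. A quaternionic line $\ell = \{(a,b) + t(c,d) : t \in \HH\}$ with $(c,d) \neq (0,0)$ is the image of $\HH \cong \R^4$ under the affine map $t \mapsto (a,b) + t(c,d)$; since right-multiplication by a nonzero element of $\HH$ is $\R$-linear and injective on $\HH^2$, this image is a $4$-dimensional real affine subspace of $\R^8$. So $L$ is a finite collection of $4$-flats in $\R^d$ with $d = 8 = 2k$, which matches the hypotheses of Corollary~\ref{flat}.

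The only remaining point to verify is that any two distinct quaternionic lines meet in at most one point. This is the step where the division-ring property is used, so I would treat it carefully because of noncommutativity. Suppose two quaternionic lines $\ell_1 = \{(a,b)+t(c,d)\}$ and $\ell_2 = \{(a',b')+s(c',d')\}$ share two distinct points corresponding to parameters $t_1 \neq t_2$ on $\ell_1$ and $s_1 \neq s_2$ on $\ell_2$. Subtracting gives $(t_2-t_1)(c,d) = (s_2-s_1)(c',d')$, so writing $\alpha = (s_2-s_1)^{-1}(t_2-t_1) \in \HH \setminus \{0\}$ (using that $\HH$ is a division ring), we obtain $(c',d') = \alpha(c,d)$. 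Then $\ell_2 = \{(a',b') + s\alpha(c,d) : s \in \HH\}$, and since $s \mapsto s\alpha$ is a bijection on $\HH$, the set $\ell_2$ is of the form $\{(a',b') + t(c,d) : t\in\HH\}$; using that this shares a point with $\ell_1$ then forces $\ell_1 = \ell_2$.

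With these three observations in hand, the hypotheses of Corollary~\ref{flat} are satisfied with $k = 4$, $d = 8$, and the conclusion \eqref{claim-flat} is exactly \eqref{claim-quaternion}. The only subtle point is step (3) above: one must be careful that the direction-vector argument uses left versus right multiplication consistently, but beyond this bookkeeping the proof is essentially immediate once one has Corollary~\ref{flat}.
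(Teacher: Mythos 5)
Your proposal is correct and follows exactly the route the paper takes: identify $\HH^2$ with $\R^8$, observe that quaternionic lines are $4$-flats and that the division-ring property forces any two distinct ones to meet in at most one point, and apply Corollary \ref{flat} with $k=4$, $d=8$. Your explicit verification of the intersection property (solving $(t_2-t_1)(c,d)=(s_2-s_1)(c',d')$ for the proportionality factor $\alpha$) is a correct fleshing-out of the one-line claim the paper makes.
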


We can also replace lines with circles.  Define a \emph{complex unit circle} to be a set of the form $\{ (z,w) \in \C^2: (z-z_0)^2+(w-w_0)^2 = 1 \}$ for some $z_0,w_0 \in \C$.  It is easy to verify that complex unit circles are real algebraic varieties in $\C^2 \equiv \R^4$ of (real) dimension $2$ and (real) degree $2 \times 2 = 4$ (because the real and imaginary parts of the defining equation $(z-z_0)^2+(w-w_0)^2=1$ are both quadratic constraints), that two complex unit circles meet in at most two points, and that two points determine at most two complex unit circles.  It is possible for a point to be incident to two distinct  complex unit circles in such a fashion that their tangent spaces (which are complex lines, or real planes) coincide, when the two circles are reflections of each other across their common tangent space; however, if we first pigeonhole the incidences into $O(1)$ classes, based on the orientation of the radial vector $(z-z_0,w-w_0)$ connecting the center $(z_0,w_0)$ of the complex unit circle to the point $(z,w)$, then we can eliminate these unwanted tangencies.  We conclude

\begin{corollary}[Cheap Szemer\'edi-Trotter for complex unit circles]  Let $\eps > 0$.  Then there exists constants $C>0$ and $A = A_\eps$ such that
\begin{equation}\label{claim-circles}
 |I(P,L)| \leq A |P|^{2/3+\eps} |L|^{2/3} + C |P| + C |L|
\end{equation}
for all finite sets of points $P$ and complex unit circles $L$ in $\C^2$.
\end{corollary}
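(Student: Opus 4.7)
The plan is to apply Theorem \ref{main} directly with $k = 2$ and $d = 4$, after identifying $\C^2$ with $\R^4$. The bulk of the work is verifying that axioms (i)--(v) hold with some $C_0 = O(1)$ depending only on the setup (I expect $C_0 = 4$ to suffice).

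For (i), each complex unit circle $\ell = \{(z,w) \in \C^2 : (z-z_0)^2 + (w-w_0)^2 = 1\}$ is a real algebraic variety in $\R^4$, cut out by the two real polynomial equations obtained as the real and imaginary parts of the defining equation; its complexification inside $\C^4$ is a complex variety of complex dimension $2$ and complex degree at most $2 \cdot 2 = 4$. For (ii), two distinct complex unit circles meet in at most two points: subtracting their defining equations gives a complex affine-linear relation, so the intersection lies in a conic-line intersection of size at most $2$ by Bezout. For (iii), symmetrically, two distinct points lie on at most two common complex unit circles, parametrizing the admissible centers by the complex perpendicular bisector. For (iv), each $\ell$ is smooth everywhere, and the (complex) tangent space at $p = (z,w)$ is the complex line perpendicular, in the complex bilinear form $u \cdot u := u_1^2 + u_2^2$, to the radial vector $v := (z - z_0, w - w_0)$; this complex line is a real $2$-plane through $p$, matching $k=2$.

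The main obstacle is axiom (v): two complex unit circles through $p$ can share a tangent line. A short calculation shows this happens precisely when the radial vectors $v, v'$ at $p$ satisfy $v' = -v$, since sharing a tangent forces $v' = \lambda v$ for some $\lambda \in \C$, and $v \cdot v = v' \cdot v' = 1$ forces $\lambda = \pm 1$, with $\lambda = +1$ giving $\ell = \ell'$. To remove these degenerate configurations, I pigeonhole the incidence set $\I := I(P,L)$ into $O(1)$ classes based on the orientation of $v \in \C^2 \setminus \{0\}$: partition the unit sphere $S^3 \subset \C^2$ into finitely many regions, none of which contains an antipodal pair, and classify each $(p,\ell)$ by which region $v/\|v\|$ lies in. By construction, any two reflected-tangent circles through a common point land in different classes, so on each class axioms (i)--(v) simultaneously hold; Theorem \ref{main} then yields the desired bound on each class, and summing over the $O(1)$ classes absorbs the multiplicity into constants $A$ and $C$ to give \eqref{claim-circles}.
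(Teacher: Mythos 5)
Your proposal is correct and follows essentially the same route as the paper: verify that complex unit circles are $2$-dimensional degree-$4$ real varieties meeting pairwise in at most two points (and dually), note that Axiom (v) fails only for the reflected pair $v' = -v$, and pigeonhole incidences into $O(1)$ classes by the orientation of the radial vector before applying Theorem \ref{main} with $k=2$, $d=4$. You merely make explicit some details (the B\'ezout count and the $\lambda = \pm 1$ computation) that the paper leaves to the reader.
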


This gives the following application to the complex unit distance problem:

\begin{corollary}[Complex unit distances]  Let $\eps > 0$.  Then there exists a constant $A = A_\eps > 0$ such that
$$ |\{ ((z,w),(z',w')) \in P \times P: (z-z')^2 + (w-w')^2 = 1 \}| \leq A_\eps |P|^{\frac{4}{3}+\eps}$$
for any finite set of points $P$ in $\C^2$.
\end{corollary}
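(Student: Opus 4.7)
The plan is to reduce the complex unit distance problem to a point-circle incidence problem and then invoke the previous corollary on complex unit circles as a black box.

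For each point $q = (z',w') \in P$, associate the complex unit circle $C_q := \{(z,w) \in \C^2 : (z-z')^2 + (w-w')^2 = 1\}$ centered at $q$. Let $L := \{C_q : q \in P\}$. Since the center of a complex unit circle is uniquely determined by the circle (and distinct centers give distinct circles of this form), we have $|L| = |P|$. The key tautological observation is that a pair $((z,w),(z',w')) \in P \times P$ satisfies $(z-z')^2+(w-w')^2 = 1$ if and only if the point $(z,w)$ is incident to the circle $C_{(z',w')}$. Therefore the quantity we wish to estimate is exactly $|I(P,L)|$.

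Applying the cheap Szemer\'edi-Trotter estimate for complex unit circles to this configuration, and using $|L| = |P|$, we get
\begin{equation*}
|I(P,L)| \leq A_\eps |P|^{\frac{2}{3}+\eps} |P|^{\frac{2}{3}} + C|P| + C|L| = A_\eps |P|^{\frac{4}{3}+\eps} + 2C|P|.
\end{equation*}
Since $|P| \leq |P|^{\frac{4}{3}+\eps}$ when $|P| \geq 1$ (and the bound is trivial when $|P|=0$), the linear term can be absorbed into the main term at the cost of enlarging the constant $A_\eps$, giving the claimed bound.

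There is no genuine obstacle here — all of the work lives in the previous corollary, whose hypotheses (complex unit circles are $2$-dimensional real varieties of bounded degree, two points determine at most two circles, two circles meet in at most two points, and the tangential degeneracy can be cleared by pigeonholing over the orientation of the radial vector) we are entitled to use. The only thing worth double-checking is that the reduction is clean: diagonal pairs $(z,w)=(z',w')$ contribute $0 \neq 1$ to $(z-z')^2+(w-w')^2$ and hence never arise, so no spurious self-incidences need to be subtracted, and the count $|L|=|P|$ is exact.
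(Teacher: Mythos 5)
Your proof is correct and is essentially identical to the paper's own (one-line) argument: the paper likewise applies the complex unit circle incidence bound to the family $L$ of unit circles centered at the points of $P$. Your additional checks (that $|L|=|P|$, that unit-distance pairs correspond exactly to incidences, and that the linear terms absorb into the main term) are all valid and just make explicit what the paper leaves implicit.
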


Indeed, this claim follows from applying \eqref{claim-circles} to the family $L$ of complex unit circles with centers in $P$.  We remark that the real-variable analogue of this result was established by Spencer, Szemer\'edi, and Trotter \cite{SST}.

Elekes used the Szemer\'edi-Trotter theorem to give a good bound on the Sum-Product problem of Erd\H os and Szemer\'edi in \cite{E1}. A variant of the original Sum-Product problem was considered by Chang who proved the following in \cite{Ch}. Let us suppose that ${\mathcal{A}}$ is an $n$-element set of $k\times k$ matrices with real coefficients such that $\det(A-B)\neq 0$ for any distinct $A, B$ elements of ${\mathcal{A}}$. Then\footnote{Here we use the usual notation $C+D=\{a+b | a\in C, b\in D \}$ and $CD=\{ab | a\in C , b\in D\}$.}
\[
|{\mathcal{A}}+{\mathcal{A}}|+|{\mathcal{A}}{\mathcal{A}}|\geq g(|{\mathcal{A}}|)|{\mathcal{A}}|
\]
where $g(n)$ goes to infinity as $n$ grows. In \cite{Ta}, the second author showed that $g(n)$ grows polynomially with $n$.

\begin{corollary}[Sum-Product]
Let us suppose that ${\mathcal{A}}$ is an $n$-element set of $k\times k$ matrices with real coefficients such that $\det(A-B)\neq 0$ for any distinct $A, B$ elements of ${\mathcal{A}}$ and $V, W \subset \R^k$ are $n$-element sets of $k$ dimensional vectors. Then for every $\eps > 0$ there exists a constant $c_{k,\eps} > 0$ independent of ${\mathcal A}$ and $n$ such that
\[
|V+W|+|{\mathcal{A}}W|\geq c n^{5/4-\eps}.
\]
\end{corollary}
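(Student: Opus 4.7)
The plan is to adapt Elekes's classical reduction from sum-product estimates to incidence problems, applying Corollary~\ref{flat} with $d = 2k$ in place of the planar Szemer\'edi--Trotter theorem. Writing $s := |V+W|$ and $t := |\mathcal{A} W|$, I would work in $\R^{2k} = \R^k\times\R^k$ with point set
\[
P := (V+W)\times \mathcal{A}W, \qquad |P| = st,
\]
and with the family of $k$-dimensional affine subspaces
\[
L := \bigl\{\ell_{A,v} : (A,v)\in\mathcal{A}\times V\bigr\}, \qquad \ell_{A,v} := \{(v+w,\,Aw) : w\in\R^k\},
\]
each of which is the graph of the affine map $x\mapsto A(x-v)$. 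For every triple $(A,v,w)\in\mathcal{A}\times V\times W$ the point $(v+w,Aw)$ lies in $P$ and on $\ell_{A,v}$, and these pairs generate the incidence set $\I \subset I(P,L)$.

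The next step is to check the pseudoline hypothesis with $C_0 = 1$. If $\ell_{A,v}$ and $\ell_{B,v'}$ meet at a common point $(x,y)$, then $y = A(x-v) = B(x-v')$, so $(A-B)x = Av - Bv'$; the hypothesis $\det(A-B)\neq 0$ uniquely determines $x$ (and hence $y$) whenever $A\neq B$, while when $A=B$ a short check shows the two flats either coincide or are disjoint. Thus distinct members of $L$ intersect in at most one point, and since each flat is its own tangent space, transversality at any intersection is automatic, so all the hypotheses of Corollary~\ref{flat} (with $d=2k$) hold.

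The final step is the numerical count. In the generic case where every $A\in\mathcal{A}$ is invertible, distinct pairs $(A,v)$ give distinct flats and the map $(A,v,w)\mapsto ((v+w,Aw),\ell_{A,v})$ is injective, so $|L| = n^2$ and $|\I| = n^3$. Substituting into \eqref{claim-flat} yields
\[
n^3 \leq A_\eps (st)^{2/3+\eps} n^{4/3} + \tfrac{3}{2} st + \tfrac{3}{2} n^2;
\]
the last term is negligible for large $n$, and in either of the remaining regimes the inequality forces $st \geq c_\eps n^{5/2 - O(\eps)}$, after which AM--GM gives $|V+W| + |\mathcal{A}W| \geq 2\sqrt{st}\geq c'_\eps n^{5/4 - O(\eps)}$ once $\eps$ is rescaled. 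The main residual obstacle is the possibility that some $A\in\mathcal{A}$ is singular, which can cause $\ell_{A,v} = \ell_{A,v'}$ whenever $v-v'\in\ker A$ and hence shrink $|L|$ and $|\I|$ below their nominal values; I would handle this degenerate case by a pigeonhole on the rank of $A$, restricting to a large sub-family of matrices of common rank and, if necessary, factoring out a common kernel on the $V$ side, so that the invertible argument applies in a smaller effective dimension with only a polynomial loss absorbable into the $n^\eps$ slack built into \eqref{claim-flat}.
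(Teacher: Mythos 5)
Your construction is exactly the paper's: the same $k$-flats $\{(\vec x,\vec y):\vec y=A(\vec x-\vec v)\}$ in $\R^{2k}$, the same point set $(V+W)\times\mathcal{A}W$, the same verification that $\det(A-B)\neq 0$ forces distinct flats to meet in at most one point, and the same numerical extraction of $st\gtrsim n^{5/2-O(\eps)}$ from Corollary~\ref{flat}; the paper's own proof is just a terser version of your first three paragraphs and does not even mention the singular-matrix degeneracy.

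On that degeneracy: your instinct that it needs attention is right (the paper silently asserts $n^2$ flats), but the patch you sketch would not work as stated, since ``factoring out a common kernel on the $V$ side'' is vacuous here --- the hypothesis $\det(A-B)\neq 0$ forces $\ker A\cap\ker B=\{0\}$ for distinct $A,B\in\mathcal{A}$, so there is no common kernel to quotient by. The same observation, however, disposes of the issue directly: a fixed nonzero difference $v-v'$ can lie in $\ker A$ for at most one $A\in\mathcal{A}$, so $\sum_{A}N_A\leq n(n-1)$ where $N_A$ counts ordered pairs with $v-v'\in\ker A$, and since the number of distinct flats with matrix $A$ is $|AV|\geq n^2/(n+N_A)$, convexity gives $|L|=\sum_A|AV|\geq n^2/2$. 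Each distinct flat still carries at least $n$ distinct incident points $(v+w,Aw)$, $w\in W$, so $|I(P,L)|\geq n|L|\geq n^3/2$ and your numerical count goes through with only a constant loss; no pigeonholing on rank is needed.
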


\begin{proof}
We will apply Corollary \ref{flat}, where the $k$-flats are given by $\{ (\vec x, \vec y) \in \R^k \times \R^k: \vec{y}=A(\vec{x}-\vec{v})\}$ with $\vec{v}\in V$ and $A\in \mathcal{A}$ and the points are the elements of the Cartesian product $\{V+W\}\times\{{\mathcal{A}}W\}$. Any two flats have at most one common point since $\det(A-B)\neq 0$ and each of them has dimension $k$ in the $2k$-dimensional real space. Any point with coordinates $(\vec{w}+\vec{v}, A\vec{w})$ is incident to $\vec{y}=A(\vec{x}-\vec{v})$. So, we have $n^2$ $k$-dimensional flats and $|V+W||{\mathcal{A}}W|$ points where each flat is incident to at least $n$ points. We can apply Corollary \ref{flat} now to prove our bound.
\end{proof}

\begin{remark} The same argument also applies to matrices and point sets with complex coefficients; we omit the details.  In the complex case with $k=1$, we almost obtain the complex version of the sum-product estimate
$$ |A + A| + |A \cdot A| \geq c |A|^{5/4}$$
obtained by Elekes \cite{E1}, but with an epsilon loss in the exponents.
\end{remark}

We show yet another application which is similar to the previous one. It is about $r$-rich affine transformations. Elekes proposed in \cite{E3} a systematic investigation of the following general problem:

{\em " ... Given a group $G$ of transformations of $\R^d$ and a finite pointset $\mathcal{P}\subset \R^d$ we shall be interested in the number of transformations $\varphi\in G$  which map many points of $\mathcal{P}$ to some other points of $\mathcal{P}$ ... "}

Here we consider affine transformations in $\R^d$. We need some notation. An affine transformation is $r$-rich with respect to $\mathcal{P}$ if $|A(\mathcal{P})\cap \mathcal{P}|\geq r.$  Elekes' question is to bound the number of $r$-rich transformations. A finite set of affine transformations, $\mathcal{A}$ is said to be {\em pairwise independent} if $A^{-1}B$ has at most one fixpoint for any $A,B\in \mathcal{A}$.

\begin{corollary}[Affine Transformations]\label{afftran}
Given an $n$-element pointset $\mathcal{P}\subset \R^d$, and let $\eps > 0$ and $r \geq 2$. Any set $X$ of pairwise independent $r$-rich affine transformations has cardinality at most $A n^{4+\varepsilon}/r^3$, where $A = A_{\eps,d} > 0$ depends only on $\eps$ and $d$.
\end{corollary}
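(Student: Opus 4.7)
The plan is to realize each affine transformation $\varphi \in X$ as its graph $\Gamma_\varphi = \{(x,\varphi(x)) : x \in \R^d\}$, which is a $d$-dimensional affine subspace of $\R^{2d} = \R^d \times \R^d$, and then apply Corollary \ref{flat} with $k = d$ (so the ambient dimension $2d$ equals $2k$) to the point set $P := \mathcal{P} \times \mathcal{P}$ (of size $n^2$) and the flat set $L := \{\Gamma_\varphi : \varphi \in X\}$ (of size $|X|$).

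The first thing to verify is that any two distinct flats in $L$ intersect in at most one point of $\R^{2d}$. A point $(x,y)$ lies in $\Gamma_A \cap \Gamma_B$ precisely when $A(x) = y = B(x)$, i.e.\ when $x$ is a fixpoint of $A^{-1}B$. By the pairwise independence assumption on $X$, there is at most one such $x$, hence at most one intersection point. Next, because each $\varphi \in X$ is $r$-rich, the graph $\Gamma_\varphi$ is incident to at least $r$ points of $\mathcal{P}\times\mathcal{P}$ (namely the pairs $(p,\varphi(p))$ with $p \in \mathcal{P}$ and $\varphi(p) \in \mathcal{P}$); therefore $|I(P,L)| \geq r|X|$.

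Corollary \ref{flat} then supplies
\begin{equation*}
r|X| \;\leq\; |I(P,L)| \;\leq\; A_{\eps,d}\, (n^2)^{2/3+\eps/2} |X|^{2/3} + \tfrac{3}{2} n^2 + \tfrac{3}{2}|X|,
\end{equation*}
after relabelling $\eps$. Since $r \geq 2$, the term $\tfrac32 |X|$ may be absorbed into the left-hand side as $\tfrac{r}{4}|X|$, leaving
\begin{equation*}
\tfrac{r}{4} |X| \;\leq\; A_{\eps,d}\, n^{4/3+\eps}|X|^{2/3} + \tfrac{3}{2}n^2.
\end{equation*}
One of the two right-hand terms dominates. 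If the first does, then $|X|^{1/3} \ll_{\eps,d} n^{4/3+\eps}/r$, giving $|X| \ll_{\eps,d} n^{4+3\eps}/r^3$. If the second does, then $|X| \ll n^2/r$, and since $r \leq n$ trivially we have $n^2/r = n^2 r^2/r^3 \leq n^4/r^3$, which is also acceptable. Combining, we get the desired bound (after a final rescaling of $\eps$).

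The only subtle step is the verification of the pseudoline-type hypothesis from pairwise independence, but this reduces immediately to the definition of a fixpoint; the remainder is the standard $r$-rich extraction argument that converts an incidence bound into a bound on $r$-rich objects, exactly as in the discussion following Corollary \ref{flat}.
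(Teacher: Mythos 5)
Your proof is correct and follows essentially the same route as the paper: identify each transformation with its graph, a $d$-flat in $\R^{2d}$, check that pairwise independence gives at most one common point between two flats, note each flat carries at least $r$ incidences with $\mathcal{P}\times\mathcal{P}$, and apply Corollary \ref{flat}. Your write-up is in fact more careful than the paper's about the final extraction (absorbing the $\tfrac32|X|$ term and handling the case $|X|\ll n^2/r$ via $r\leq n$), which the paper leaves implicit.
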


\begin{proof} Each affine transformation in $X$ can be written as $\vec{x}\rightarrow A\vec{x}+\vec{v}$, which we can view as a $d$-flat $\{ (\vec x, \vec y) \in \R^d \times \R^d: \vec y = A \vec x + \vec v\}$ in $\R^d \times \R^d$. Each such flat is incident to at least $r$ points of the Cartesian product $\mathcal{P}\times \mathcal{P}$. There are $|X|$ $d$-dimensional flats and $n^2$ points where each flat is incident to at least $r$ points. Any two flats have at most one common point since the transformations are pairwise independent (two or more common points would mean that the corresponding affine transformations are identical on a line). One can apply Corollary \ref{flat} again to prove the bound $|X|\ll n^{4+\varepsilon}/r^3$.
\end{proof}

\subsection{Comparison with existing results}\label{comparison}
The polynomial partitioning method is not the only method to establish incidence bounds between points and varieties.  In particular, there are other methods to obtain cell decompositions which can achieve a similar effect to the decomposition given by the polynomial Ham Sandwich theorem, though the hypotheses on the configuration of points and varieties can be quite different from those considered here.  A model case is when the point set is assumed to be \emph{homogeneous}, which roughly speaking means that the point set resembles a perturbation of a grid.  In such cases one can use the cubes of the grid to form the cells.  For instance, in \cite{sototh} and \cite{sovu} sharp incidence bounds between a homogeneous set of points and $k$-dimensional subspaces were given. In \cite{laba} sharp point-pseudoplane incidence bounds were proved in $\R^3$. Similar bounds on point-surface incidences were proved for the non-homogenous case by  Zahl \cite{zahl}.

Elekes \cite{E3}, Sharir and Welzl  \cite{sharir}, and Guth and Katz \cite{GK-joints} gave bounds on the number of \emph{joints}. We omit the details but we should mention that  the latter paper in particular uses the polynomial space partition method to give a satisfactory bound on the number of joints.

Pach and Sharir \cite{pach, pach2} considered incidences between points and \emph{pseudolines} in the plane - curves which obey axioms similar to Axioms (ii) and (iii) in Theorem \ref{main}, using crossing number inequalities.  Such methods work particularly well in the plane, but are somewhat difficult to extend to higher dimensions; for some partial results in three and higher dimensions, see \cite{aronov}.

In the hyperplane case $k=d-1$, sharp incidence bounds were obtained in \cite{elekes} (of course, the transversality hypothesis needs to be modified in this regime).

In \cite{iosevich}, Fourier-analytic methods were used to obtain incidence bounds of Szemer\'edi-Trotter type. In this method, the manifolds $\ell$ are not required to be algebraic varieties, but they are required to obey certain regularity hypotheses relating to the smoothing properties an associated generalized Radon transform (which usually forces them to be fairly high dimensional.  Also, the point set $P$ is assumed to obey a homogeneity assumption.

In \cite{solymosi} a simple proof was given to a Szemer\'edi-Trotter type bound for incidences between complex points and lines, however the point set is assumed to be of Cartesian product of the form $A \times B \subset \C^d$.

\section{A special case}\label{special}

Before we prove Theorem \ref{main}, we illustrate the key elements of the proof by sketching the proof of the (cheap) complex Szemer\'edi-Trotter Theorem which was stated earlier as Corollary \ref{complex}.  In this low-dimensional setting one can avoid an induction on dimension, instead using the crossing number machinery of Sz\'ekely \cite{szekely} to deal with the contribution of various lower dimensional objects.   The reader who is impatient to get to the proof of the full theorem may skip this section if desired.

\medskip
Let $C_1$ be a large constant to be chosen later, and let $C_3$ be an even larger constant (depending on $C_1$) to be chosen later.  (The subscripts here are chosen to conform to the notation of subsequent sections.) We will show that
\begin{equation}\label{incidence}
|\I| \leq C_3n^{2/3+\varepsilon}m^{2/3} +C_1(n+m)
\end{equation}
for all $n,m \geq 0$ and sets $P$, $L$ of points and complex lines in $\C^2$ with $|P|=n$ and $|L|=m$, by induction on $n$.  The claim is trivial when $n=0$, so suppose that $n \geq 1$, and that the claim has already been proven for smaller $n$.

A well-known Cauchy-Schwarz argument (based on the fact that two lines determine at most one point) gives the bounds
$$ |\I|  \ll n^{1/2} m + n, m^{1/2} n + m$$
(see Lemma \ref{triv-lem} below).  As a consequence, we may restrict attention to the regime
\begin{equation}\label{reg}
m^{1/2} \ll n \ll m^2
\end{equation}
for the purposes of establishing \eqref{incidence}.

One can suppose that every point has at least half of the average incident lines w.l.o.g. (If a point has less than $|\I|/2m$ lines incident to it then simply remove it from $P$.) Then counting the points gives a lower bound on the incidences, e.g. if a region of $\C^2$ contains $k$ points then there are at least $k|\I|/2m$ incidences between the $k$ points and the set $L$.

\medskip

Let us identify $\C$ with $\R^2$ and partition $\C^2=\R^4$ into $M$ cells plus a boundary hypersurface, where $M$ is to be determined later. One can use a degree $D<(12M)^{1/4}$ polynomial $Q$ so that $\R^4\setminus\{Q=0\}$ has $M$ components and no component contains more than $n/M$ points:
$$ \R^{4} = \{ Q = 0 \} \cup \Omega_1 \cup \ldots \cup \Omega_M$$
The existence of such polynomial follows from Corollary \ref{cell-decomp} below.

\medskip

If most of the incidences are in $\R^4\setminus \{Q=0\}$ (inside a cell) then a simple double-counting argument gives the desired bound. This is the case when we are going to use the induction hypothesis. If most of the incidences are on $\{Q=0\}$ then we will bound the number of incidences directly.

Let $\ell \in L$ be one of the complex lines.  Assume first that $\ell$ is not on the surface $\{Q=0\}$.  We will now bound the number of cells that $\ell$ intersects as follows.  We can parameterize $\ell$ in $\C^2$ as
$$ \ell = \{ (z,Az+B): z \in \C \}$$
for some complex numbers $A, B$ (ignoring for the sake of the sketch the ``vertical'' case when $A$ is infinite).  Writing $\C^2$ as $\R^4$, this becomes
$$ \ell = \{ (s,t,as-bt+c,at+bs+d): s,t \in \R \}$$
for some real numbers $a,b,c,d$.  The polynomial $Q(x_1,x_2,x_3,x_4)$ restricted to $\ell$ is then a degree $D$ polynomial of variables $s$ and $t$. As $\{Q=0\}$ is a degree $D$ curve, the number of connected components  of $\ell\setminus \{Q=0\}$ is at most $2(\binom{D-1}{2}+1)$ by the Harnack curve theorem \cite{Ha} (note that each component of $\ell\setminus \{Q=0\}$ will contain a component of $\{Q =\pm\eps\}$ if $\eps$ is small enough); thus each line $\ell$ meets at most $D^2$ cells $\Omega_i$.  

Let $L_i$ denotes the set of lines in $L$ that have non-empty intersection with $\Omega_i$. By \eqref{incidence}, the number of incidences in each cell is bounded by 
  \begin{align*}
|\I \cap I( P \cap \Omega_i, L_i )| &\leq C_3 |P \cap \Omega_i|^{\frac{2}{3}+\eps} |L_i|^{\frac{2}{3}} + C_1(|P \cap \Omega_i| + |L_i|) \\
&\leq C_3 (n/M)^{\frac{2}{3}+\eps} |L_i|^{\frac{2}{3}} + C_1(n/M + |L_i|).
\end{align*}
On the other hand, as each line $\ell$ meets at most $D^2$ cells, we have
$$ \sum_{i=1}^M |L_i| \leq D^2|L|$$
and hence by H\"older's inequality
$$ \sum_{i=1}^M |L_i|^{\frac{2}{3}} \leq \left(D^2 |L|\right)^{\frac{2}{3}}M^{\frac{1}{3}}.$$
Inserting this into our preceding bound and using the fact that $D \leq (12 M)^{1/4}$, we conclude that
$$ \sum_{i=1}^M |\I \cap I(P \cap \Omega_i, L_i)| \leq 12^{\frac{1}{3}}M^{-\eps}C_3 n^{\frac{2}{3}+\eps} m^{\frac{2}{3}} + C_1(n + Mm).$$
Using the hypothesis, we thus conclude (if we choose $M \geq (4\times 12^{\frac{1}{3}})^{\frac{1}{\eps}}$, and then choose $C_3$ large enough depending on $C_1,M$ so that the $C_1(n+Mm)$ term here can be absorbed into the main term using \eqref{reg}) that
$$ \sum_{i=1}^M |\I \cap I(P \cap \Omega_i, L_i)| \leq \frac{1}{2} C_3 n^{\frac{2}{3}+\eps} m^{\frac{2}{3}}.$$

The above estimate handles all the incidences that lie outside of $\{Q=0\}$.
To close the induction, it will thus suffice (if $C_3, C_1$ are chosen large enough) to show that
\begin{equation}\label{illustration}
|\I \cap I(P \cap \{Q=0\}, L)| \ll_{M} n^{\frac{2}{3}+\eps} m^{\frac{2}{3}} + n + m.
\end{equation}
\medskip

To establish this, we first perform a technical decomposition to avoid the issue of singular points on the hypersurface $\{Q=0\}$.  More specifically, we introduce a sequence of hypersurfaces $S_0, S_1, \ldots, S_D$ by setting $S_0:=\{Q=0\}$, $Q_0:=Q$, and 
$$Q_{i+1}:=\sum_{j=1}^4\displaystyle \alpha_j^{(i)}\frac{\partial}{\partial x_j}Q_i$$
for $1 \leq i \leq D$, which defines the surface $S_{i+1}=\{Q_{i+1}=0\}$; here the $\alpha_j^{(i)}$ are generic reals.   All of the $S_i$ have degree at most $D$. For each point $p\in P \cap \{Q=0\}$ there is an index $i=\operatorname{ind}(p)$ between $0$ and $D$ which is the first $i$ when $p\not\in S_{i+1}$. Note that this implies that $p$ is a smooth point of $S_i$, so that the tangent space to $S_i$ at $p$ is three-dimensional, and in particular can contain at most one complex line.  In particular, at most one complex line incident to $p$ is on the hypersurface $S_i$; all other incident complex lines intersect $S_i$ in a one-dimensional curve. Let us consider the set of points $P_i=\{p\in P, \operatorname{ind}(p)=i\}$.  We bound the number of incidences $\I_i$ between $P_i$ and $L$. We will follow Sz\'ekely's method \cite{szekely}. The complex lines on the surface $S_i$ give no more than $|P_i|$ incidences, so it is enough to consider incidences between $P_i$ and complex lines intersecting $S_i$ in a curve. The degree of the intersection curve is at most $D$. Let us project the points $P_i$ and the intersection curves onto a generic plane. We will define a geometric graph $G^{(i)}$ drawn on this generic plane so that the vertex set is the projection of $P_i$.  If a curve is incident to $s+D^2/2$ vertices then we can draw at least $s$ edges along (a component of) the curve without multiple edges. The total number of edge intersections is bounded by B\'ezout's theorem, it is not more than $\binom{m}{2}D^2$. On the other hand one can apply the crossing number inequality \cite{ACNSz,Le} to get a lower bound on the number of crossings. The graph $G^{(i)}$ has at least $|\I_i|-mD^2/2$ edges.  If $|\I_i|-mD^2/2\geq 4|P_i|$ then 

$$\binom{m}{2}D^2\geq \frac{(|\I_i|-mD^2/2)^3}{64|P_i|^2}$$

So either $|\I_i|\leq mD^2/2 + 4|P_i|$ or $|\I_i|\leq 4(mD|P_i|)^{2/3}$. Summing the number of incidences over the $P_i$, $i=1,\ldots,D$ gives the desired inequality (\ref{illustration}) (noting that the implied constants can depend on $M$ and hence on $D$).

\begin{remark} One can also view this argument from a recursive perspective rather than an inductive one.  With this perspective, one starts with a collection of $n$ points and $m$ lines and repeatedly passes to smaller configurations of about $n/M$ points and a smaller number of lines as well.  Thus, one expects about $\log_M n$ iterations in procedure.  Collecting all the bounds together to obtain a final bound of the form $A n^{2/3} m^{2/3}$
(ignoring the lower order terms $n,m$ for now), we see that with each iteration, the constant $A$ increases by a bounded multiplicative factor (independent of $M$), assuming that $A$ was chosen sufficiently large depending on $M$.  Putting together these increases, one obtains a final value of $A$ of the shape $C_M \times C^{\log_M n}$; letting $M$ become large, this gives bounds of the shape $C_\eps n^\eps$ as claimed.  The key point is that the main term in the estimate only grows by a constant factor independent of $M$ with each step of the iteration; the lower order terms, on the other hand, are permitted to grow by constants depending on $C_M$, as they can be absorbed into the main term (using the reduction to the regime \eqref{reg}). 
\end{remark}

\medskip

\section{Some algebraic geometry}\label{alg-geom}

In this section we review some notation and facts from algebraic geometry that we will need here.  Standard references for this material include  \cite{harris}, \cite{griffiths} or \cite{Mumford}.

It will be convenient to define algebraic geometric notions over the field $\C$, as it is algebraically complete. However, for our applications we will only need to deal with the real points of algebraic sets.

\begin{definition}[Algebraic sets]  Let $d \geq 1$ be a dimension.  An \emph{algebraic set} in $\C^d$ is any set of the form
$$ \{ x \in \C^d: P_1(x) =\ldots=P_m(x) = 0\}$$
where $P_1,\ldots,P_m: \C^d \to \C$ are polynomials.  If one can take $m=1$, we call the algebraic set a \emph{hypersurface}.  An algebraic set is \emph{irreducible} if it cannot be expressed as the union of two strictly smaller algebraic sets.  An irreducible algebraic set will be referred to as an \emph{algebraic variety}, or \emph{variety} for short.

The intersection of any subset of $\C^d$ with $\R^d$ will be referred to as the \emph{real points} of that subset.  A \emph{real algebraic variety} is the real points $V_\R$ of a complex algebraic variety $V$.\footnote{Strictly speaking, because two different complex varieties may have the same real points, a real algebraic variety should really be viewed as a pair $(V_\R,V)$ rather than just the set $V_\R$; however, we will abuse notation and identify a real algebraic variety with the set $V_\R$ of real points.}
\end{definition}

If $V$ is a variety in $\C^d$, we can define the \emph{dimension} $\dim(V)=r$ of $V$ to be the largest natural number for which there exists a sequence
$$ \emptyset \neq V_0 \subsetneq V_1 \subsetneq \ldots \subsetneq V_r = V$$
of varieties between $\emptyset$ and $V$.  There are many alternate definitions of this quantity.  For instance, $r$ is also the transcendence degree of the function field $\C(V)$ of $V$ (see \cite[\S I.7]{Mumford}).  Thus, for instance $\dim(\C^d) = d$, and one has $\dim(V) \leq \dim(W)$ whenever $V \subset W$ are varieties, with the inequality being strict if $V \neq W$; in particular, for $V\subset\C^d$, $\dim(V)$ is an integer between $0$ and $d$.  The quantity $d-\dim(V)$ is called the \emph{codimension} of $V$.

It is known (see e.g. \cite[\S 1.3]{griffiths}) that to any $r$-dimensional variety one can associate a unique natural number $D$, called the \emph{degree} of $V$, with the property that almost every codimension $r$ affine subspace of $\C^d$ intersects $V$ in exactly $D$ points.  Thus, for instance, if $P: \C^d \to \C$ is an irreducible polynomial of degree $D$, then the hypersurface $\{P=0\}$ has dimension $d-1$ (and thus codimension $1$) and degree $D$.  

We do not attempt to define the notions of degree and dimension directly for real algebraic varieties, as there are some subtle issues that arise in this setting (see e.g. \cite{roy}).  However, in our applications every real algebraic variety will be associated with a complex one, which of course will carry a notion of degree and dimension.  Later on (by using Proposition \ref{complexity} below) we will see that we may easily reduce to the model case in which the real algebraic varieties have full dimension inside their complex counterparts, in the sense that the real tangent spaces have the same dimension as the complex ones.

Every algebraic set can be uniquely decomposed as the union of finitely many varieties, none of which are contained in any other (see e.g. \cite[Proposition I.5.3]{Mumford}).  We define the dimension of the algebraic set to be the largest dimension of any of its component varieties.

If $V$ is an $r$-dimensional variety in $\C^d$, and $P: \C^d \to V$ is a polynomial which is not identically zero on $V$, then every component of $V \cap \{P=0\}$ has dimension $r-1$ (see \cite[\S I.8]{Mumford}).

A basic fact is that the degree of a variety controls its complexity\footnote{Here, we use the term ``complexity'' informally to refer to the number and degree of polynomials needed to define the variety.}:

\begin{lemma}[Degree controls complexity]\label{dec}  Let $V$ be an algebraic variety in $\C^d$ of degree at most $D$.  Then we can write
$$ V = \{ x \in \C^d: P_1(x) =\ldots=P_m(x) = 0\}$$
for some $m = O_{d,D}(1)$ and some polynomials $P_1,\ldots,P_m$ of degree at most $D$.
\end{lemma}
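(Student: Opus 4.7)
The plan is to realize $V$ as the common zero locus of a basis for the finite-dimensional vector space
$$J_D := \{P \in \C[x_1,\ldots,x_d] : \deg P \leq D \text{ and } P|_V \equiv 0\}.$$
This space sits inside the space of all polynomials on $\C^d$ of degree at most $D$, which has dimension $\binom{d+D}{d} = O_{d,D}(1)$; so any basis $P_1,\ldots,P_m$ of $J_D$ automatically has $m = O_{d,D}(1)$, giving the bound on the number of defining polynomials. The inclusion $V \subseteq \{P_1 = \cdots = P_m = 0\}$ is immediate from the definition of $J_D$.

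The substantive content is the reverse inclusion: for every $q \in \C^d \setminus V$ we must produce some $P \in J_D$ with $P(q) \neq 0$. For this I would use a generic linear projection argument. Set $r := \dim V$ and consider a linear projection $\pi : \C^d \to \C^{r+1}$ chosen generically so that (i) $\pi|_V$ is generically one-to-one onto its image, (ii) the image $\pi(V)$ is a hypersurface of degree exactly $\deg V \leq D$ in $\C^{r+1}$, cut out by a single polynomial $f \in \C[y_1,\ldots,y_{r+1}]$ of that degree, and (iii) $\pi(q) \notin \pi(V)$. Properties (i)--(ii) are the classical fact that the degree of a variety is preserved by a generic linear projection to a space of one more dimension than $\dim V$ (see e.g.\ \cite[\S 18]{harris}); this follows from the definition of degree by pulling back a generic line in $\C^{r+1}$ to a generic codimension-$r$ affine subspace of $\C^d$ meeting $V$ in exactly $\deg V$ points. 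Property (iii) is an open condition on $\pi$ in the Grassmannian of linear projections and is satisfied generically, precisely because $q \notin V$. Each of (i)--(iii) carves out a nonempty Zariski-open subset of the Grassmannian of such projections, so their intersection is nonempty, and a simultaneous $\pi$ exists.

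Then $P := f \circ \pi$ is a polynomial on $\C^d$ of degree $\leq D$, vanishes on $V$, and satisfies $P(q) = f(\pi(q)) \neq 0$; hence $P \in J_D$ separates $q$ from $V$, proving $\{P_1 = \cdots = P_m = 0\} \subseteq V$ and completing the argument. The main obstacle is the invocation of the generic-projection facts in (i) and (ii); in particular, ensuring that the degree is preserved (and not dropped by collapsing of fibers) requires passing to projective closures and checking that the generic codimension-$(d-r-1)$ center of projection misses $V$ while intersecting the projective completion of $V$ transversely.
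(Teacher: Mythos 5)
Your proposal is correct and takes essentially the same approach as the paper: the paper's proof is just a citation to \cite[Theorem A.3]{bgt} and \cite{mum0} together with the remark that one may take $P_1,\ldots,P_m$ to be a basis of the space of polynomials of degree at most $D$ vanishing on $V$ --- exactly your $J_D$ --- and the generic-projection/cone argument you supply for the reverse inclusion is the standard proof contained in those references. Your closing caveat is the right one to flag: since $\pi(V)$ need only be constructible, one must arrange that $\pi(q)$ avoids the Zariski closure of $\pi(V)$, i.e.\ that the projective fibre of $\pi$ through $q$ misses the projective closure of $V$, which holds generically by the dimension count $r+(d-r-1)<d$.
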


\begin{proof} See \cite[Theorem A.3]{bgt} or \cite{mum0}.  Indeed, one can take $P_1,\ldots,P_m$ to be a linear basis for the vector space of all the polynomials of degree at most $D$ that vanish identically on $V$.
\end{proof}

We have the following converse:

\begin{lemma}[Complexity controls degree]\label{cont} Let
$$ V = \{ x \in \C^d: P_1(x) =\ldots=P_m(x) = 0\}$$
for some $m \geq 0$ and some polynomials $P_1,\ldots,P_m: \C^d \to \C$ of degree at most $D$.  Then $V$ is the union of $O_{m,D,d}(1)$ varieties of degree $O_{m,D,d}(1)$.
\end{lemma}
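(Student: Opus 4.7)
The plan is to induct on the number $m$ of defining polynomials, the inductive step being driven by a Bezout-type bound for intersecting a variety with a hypersurface. The base case $m=0$ is immediate, since then $V = \C^d$ is itself a single irreducible variety of degree $1$. For the inductive step, assume that $V_{m-1} := \{x \in \C^d : P_1(x)=\cdots=P_{m-1}(x)=0\}$ has already been written as a finite union $W_1 \cup \cdots \cup W_N$ of irreducible varieties, each of degree at most $D'$, where $N, D' = O_{m-1,D,d}(1)$. Since $V = V_{m-1} \cap \{P_m = 0\} = \bigcup_j \bigl(W_j \cap \{P_m = 0\}\bigr)$, it suffices to control each intersection $W_j \cap \{P_m = 0\}$ separately and then take a union.

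For fixed $j$, there are two cases. If $P_m$ vanishes identically on $W_j$, then $W_j \cap \{P_m=0\} = W_j$, contributing a single irreducible variety of degree at most $D'$. Otherwise, the dimension fact recalled in the discussion preceding Lemma \ref{dec} (from \cite{Mumford}) guarantees that every irreducible component of $W_j \cap \{P_m=0\}$ has dimension exactly $\dim(W_j)-1$, and the Bezout-type inequality — namely, that the sum of the degrees of the irreducible components of $W \cap \{P=0\}$ is bounded by $(\deg W)(\deg P)$ whenever $P$ does not vanish identically on $W$ — forces both the number of these new components and their individual degrees to be at most $D D'$. Summing over $j$, one finds that $V$ is a union of at most $N D D'$ irreducible varieties of degree at most $D D'$. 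Iterating this recursion $m$ times yields a degree bound of the form $D' \leq D^m$ and a component-count bound of the form $N \leq D^{O(m^2)}$, both of which are of the required shape $O_{m,D,d}(1)$.

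A small clean-up then finishes the proof: the list of varieties produced by the induction may contain duplicates or members that are strictly contained in other members of the list, and discarding such redundancies yields an honest irredundant decomposition of $V$ with the same quantitative control. The step I expect to be the main obstacle is invoking Bezout's inequality in the correct generality — the two-hypersurface form is too weak here, since at each stage one intersects a variety of possibly large codimension with a new hypersurface. What is needed is the sharper statement that the sum of degrees of components of $W \cap \{P=0\}$ is at most $(\deg W)(\deg P)$, which is classical but does require one to adopt the convention that the degree of a reducible algebraic set is the sum of the degrees of its components, and to quote the relevant form from a reference such as \cite{Mumford}. Once that input is in hand, the rest is bookkeeping.
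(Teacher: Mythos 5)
Your argument is correct, but it is a genuinely different route from the paper's: the paper does not prove this lemma at all, instead citing Lemma A.4 and Lemma 3.5 of Breuillard--Green--Tao \cite{bgt} for the decomposition and the degree bound respectively. Your self-contained alternative --- inducting on $m$ and, at each stage, intersecting each irreducible piece with the next hypersurface --- works, and you have correctly isolated the one nontrivial input: the refined form of B\'ezout asserting that if $P$ does not vanish identically on an irreducible $W$, then the sum of the degrees of the irreducible components of $W \cap \{P=0\}$ is at most $(\deg W)(\deg P)$. Two small remarks. First, this input is cleaner than you fear: since you have already invoked the fact (quoted in the paper before Lemma \ref{dec} from \cite[\S I.8]{Mumford}) that every component of $W \cap \{P=0\}$ has dimension $\dim(W)-1$, the intersection is proper, so classical B\'ezout with intersection multiplicities $m_i \geq 1$ gives $\sum_i m_i \deg(Z_i) = (\deg W)(\deg P)$ and hence the inequality you need; in the affine setting one passes to projective closures and discards components at infinity, or one quotes the affine B\'ezout inequality of \cite{heintz}, which the paper itself uses in the appendix for Theorem \ref{pad}. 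Second, your bookkeeping ($D' \leq D^m$, $N \leq D^{O(m^2)}$) is valid but lossier than necessary --- the total degree of all components at stage $i$ is at most $D^i$, which bounds the count by $D^i$ directly --- and the final clean-up pass is optional, since the lemma only asks for a union of boundedly many bounded-degree varieties, not an irredundant decomposition. What your approach buys is an explicit, elementary, quantitative proof in place of a black-box citation; what the citation buys the authors is brevity and a statement in terms of ``complexity'' that meshes with Lemma \ref{dec}.
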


\begin{proof} See \cite[Lemma A.4]{bgt} (to obtain the decomposition) and \cite[Lemma 3.5]{bgt} (to bound the degree).
\end{proof}

A \emph{smooth point} of a $k$-dimensional algebraic variety $V$ is an element $p$ of $V$ such that $V$ can be locally described by a smooth $k$-dimensional complex manifold in a neighborhood of $p$.  Points in $V$ that are not smooth will be called \emph{singular}.  We let $V^{\operatorname{smooth}}$ denote the smooth points of $V$, and $V^{\operatorname{sing}} := V \backslash V^{\operatorname{smooth}}$ denote the singular points.

It is well known that ``most'' points in an algebraic variety $V$ are smooth (see e.g. \cite[Theorem 5.6.8]{taylor}).  In fact, we have the following quantitative statement:

\begin{proposition}[Most points smooth]\label{most}  Let $V$ be a $k$-dimensional algebraic variety in $\C^d$ of degree at most $D$.   Then $V^{\operatorname{sing}}$ can be covered by $O_{D,d}(1)$ algebraic varieties in $V$ of dimension at most $k-1$ and degree $O_{D,d}(1)$.
\end{proposition}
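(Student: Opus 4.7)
The plan is to identify $V^{\operatorname{sing}}$ with the zero set of an explicit system of polynomials whose number and degree are controlled by $d$ and $D$, and then invoke Lemma \ref{cont} to decompose this zero set into low-degree varieties; the dimension bound will then be automatic from irreducibility of $V$.

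First, I would apply Lemma \ref{dec} to write $V = \{x \in \C^d : P_1(x) = \ldots = P_m(x) = 0\}$ with $m = O_{d,D}(1)$ and each $\deg P_i \leq D$. Because $V$ is irreducible of dimension $k$, the Jacobian criterion (see \cite[\S I.5]{Mumford}) characterises the singular locus as
\beq
V^{\operatorname{sing}} = \{ p \in V : \operatorname{rank}\bigl(\partial P_i/\partial x_j(p)\bigr)_{i,j} < d-k\},
\eeq
i.e.\ the set of points of $V$ at which every $(d-k)\times(d-k)$ minor of the $m\times d$ Jacobian matrix vanishes. There are $\binom{m}{d-k}\binom{d}{d-k} = O_{d,D}(1)$ such minors, each a polynomial of degree at most $(d-k)(D-1) = O_{d,D}(1)$. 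Adjoining these minors to the defining polynomials $P_1,\ldots,P_m$, we exhibit $V^{\operatorname{sing}}$ as the common zero locus of $O_{d,D}(1)$ polynomials of degree $O_{d,D}(1)$.

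Next, Lemma \ref{cont} immediately expresses $V^{\operatorname{sing}}$ as a union of $O_{d,D}(1)$ varieties $W_1,\ldots,W_N$, each of degree $O_{d,D}(1)$. It remains to control their dimensions. Since $V$ is irreducible and the set of smooth points of a variety is nonempty (this is the standard fact cited before the proposition, \cite[Theorem 5.6.8]{taylor}), we have $V^{\operatorname{sing}} \subsetneq V$. Each $W_j$ is contained in $V^{\operatorname{sing}}$, hence is a proper subvariety of the irreducible $k$-dimensional variety $V$, and therefore satisfies $\dim W_j \leq k-1$. This gives the claimed covering.

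The only real subtlety I anticipate is making sure the Jacobian criterion is being invoked with the correct global presentation: it is crucial that $V$ is irreducible (so equidimensional of dimension $k$), which is precisely what lets us use the single rank cutoff $d-k$ uniformly across $V$. If instead one wanted to handle reducible algebraic sets, one would have to cover $V$ first by its irreducible components (via Lemma \ref{cont}) and apply the argument component by component; but for a variety as stated, the argument is a clean combination of Lemmas \ref{dec} and \ref{cont} with the Jacobian criterion.
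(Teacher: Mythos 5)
Your strategy (Jacobian minors plus Lemma \ref{cont}) is genuinely different from the paper's, which avoids the Jacobian criterion altogether and instead runs a downward induction on an ambient variety $W \supseteq V$, at each stage locating a single polynomial $P_1$ vanishing on $V$ but not on $W$ and analysing where $\nabla P_1$ vanishes. There is, however, a gap at the step you flag as ``the only real subtlety''---and the subtlety is not the one you identify. The Jacobian criterion characterises $V^{\operatorname{sing}}$ as the rank-deficiency locus only when $P_1,\ldots,P_m$ generate the ideal $I(V)$ (this is how it is stated in Mumford), whereas Lemma \ref{dec} only provides polynomials cutting out $V$ \emph{set-theoretically}; the degree-$\leq D$ part of $I(V)$ need not generate $I(V)$ as an ideal. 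With merely set-theoretic equations one does get for free the inclusion $V^{\operatorname{sing}} \subseteq \Sigma := \{p \in V : \operatorname{rank}(\partial P_i/\partial x_j(p)) < d-k\}$, since the Zariski tangent space at a singular point has dimension greater than $k$; but the reverse inclusion can fail, and nothing a priori prevents $\Sigma = V$ (compare $V = \{0\} \subset \C$ cut out by $x^2$: the derivative vanishes identically on $V$). If $\Sigma = V$, then Lemma \ref{cont} applied to $\Sigma$ returns $V$ itself as one of the $W_j$ and the dimension bound fails. Density of smooth points gives $V^{\operatorname{sing}} \subsetneq V$, but that is not the same as $\Sigma \subsetneq V$, which is what your argument actually needs.

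The gap is repairable: one must show that the differentials of the degree-$\leq D$ elements of $I(V)$ span the full conormal space at a \emph{generic} point of $V$ (for instance via Mumford's cone construction, which exhibits hypersurfaces of degree $\leq D$ containing $V$ whose tangent hyperplanes at a generic smooth point $p$ intersect exactly in $T_pV$), whence $\Sigma \subsetneq V$ and the rest of your argument goes through verbatim; alternatively one can invoke an effective bound, depending only on $d$ and $D$, on the degrees of actual ideal generators of $I(V)$. Either way this input must be supplied explicitly. The paper's case analysis on $\nabla P_1$ within the downward induction is designed precisely to certify smoothness one polynomial at a time via the inverse function theorem, sidestepping any ideal-theoretic hypothesis; your route is shorter when the transversality of the bounded-degree equations is granted, but that is exactly the point that cannot be taken for granted.
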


\begin{proof}   We will induct on the codimension $d-k$ of $V$.  The codimension zero case $k=d$ is trivial.
Now suppose that $d-k \geq 1$, and the claim has already been proven for smaller values of the codimension.  As $V$ is a $k$-dimensional variety of degree at most $D$, almost every projection of $V$ from $\C^d$ to $\C^{d-1}$ will be dense in a $k$-dimensional variety of degree at most $D$ (the density can be seen by looking at the neighbourhood of a smooth point of $V$).  Applying a linear transformation, we may thus assume without loss of generality that the projection of $V$ under the standard projection $\pi: \C^d \to \C^{d-1}$ defined by $\pi(x_1,\dots,x_d) := (x_1,\dots,x_{d-1})$ is a dense subset of a $k$-dimensional variety $V'$ of degree at most $D$.  By induction hypothesis, $V'$ is smooth outside of $O_{D,1}(1)$ varieties in $V'$ of dimension at most $k-1$ and degree $O_{D,d}(1)$.  

By Lemma \ref{dec}, $V$ is the zero locus of some polynomials $P_1,\dots,P_m$ of degree at most $D$ with $m = O_{D,d}(1)$.  Since $V$ has smaller dimension than $\pi^{-1}(V')$, at least one of the polynomials does not vanish identically on $\pi^{-1}(V')$.  Without loss of generality we may assume that $P_1$ does not vanish identically on $\pi^{-1}(V')$.  If $\partial_{x_d} P_1$ vanished identically on $\pi^{-1}(V')$, then as $P_1$ vanishes on $V$, we see the fundamental theorem of calculus $P_1$ would vanish on $\pi^{-1}(\pi(V))$.  But this is a dense subset of $\pi^{-1}(V')$ and so $P_1$ vanishes on $\pi^{-1}(V')$, a contradiction.  Thus $\partial_{x_d} P_1$ does not vanish identically on $\pi^{-1}(V')$.  If $\partial_{x_d} P_1$ vanishes identically on $V$, we differentiate again and repeat the above argument.  Since $\partial_{x_d}^j P_1$ vanishes identically on $\pi^{-1}(V')$ for $j > D$, we conclude that there exists $0 \leq j \leq D$ such that $\partial_{x_d}^j P_1$ vanishes identically on $V$, but such that $\partial_{x_d}^j P_1$ does not vanish identically on $\pi^{-1}(V')$, and $\partial_{x_d}^{j+1} P_1$ does not vanish identically on $V$.  In the neighbourhood of any point $(x_1,\dots,x_d)$ in $V$ with $(x_1,\dots,x_{d-1}) \in (V')^{\operatorname{smooth}}$ and $\partial_{x_d}^{j+1} P_1(x_1,\dots,x_d) \neq 0$, we see by applying the implicit function theorem to $\partial_{x_d}^j P_1$ that $(x_1,\dots,x_d)$ is smooth.  We conclude (using Lemma \ref{dec} and Lemma \ref{cont}) that $V$ is smooth outside of $O_{D,d}(1)$ varieties of dimension at most $k-1$ and degree $O_{D,d}(1)$, closing the induction.
\end{proof}

We may iterate this proposition (performing an induction on the dimension $k$) to obtain

\begin{corollary}[Decomposition into smooth points]\label{decomp} Let $V$ be a $k$-dimensional algebraic variety in $\C^d$ of degree at most $D$.   Then one can cover $V$ by $V^{\operatorname{smooth}}$ and $O_{D,d}(1)$ sets of the form $W^{\operatorname{smooth}}$, where each $W$ is an algebraic variety in $V$ of dimension at most $k-1$ and degree $O_{D,d}(1)$.
\end{corollary}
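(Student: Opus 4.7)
The plan is to prove the corollary by downward induction on the complement of the dimension, i.e.\ by induction on $k$, using Proposition \ref{most} as the one-step ingredient that converts ``cover the variety'' into ``cover the smooth part, plus a bounded number of lower-dimensional varieties''.

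For the base case $k=0$, a zero-dimensional variety consists of a single point, which is automatically smooth, so $V = V^{\operatorname{smooth}}$ and nothing further is needed. Assume now that the corollary has been established for all varieties of dimension strictly less than $k$, and let $V$ be a $k$-dimensional variety of degree at most $D$. First I would apply Proposition \ref{most} to $V$ to obtain algebraic varieties $W_1,\ldots,W_N \subset V$ with $N = O_{D,d}(1)$, each of dimension at most $k-1$ and degree at most $D' = O_{D,d}(1)$, such that
\[
V^{\operatorname{sing}} \subset W_1 \cup \cdots \cup W_N.
\]
This immediately gives the decomposition
\[
V = V^{\operatorname{smooth}} \cup W_1 \cup \cdots \cup W_N.
\]

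Next I would apply the inductive hypothesis to each $W_i$ (noting that each $W_i$ has dimension at most $k-1$ and degree at most $D' = O_{D,d}(1)$). This produces, for each $i$, a cover of $W_i$ by $W_i^{\operatorname{smooth}}$ together with $O_{D',d}(1) = O_{D,d}(1)$ sets of the form $W^{\operatorname{smooth}}$, where each $W$ is a subvariety of $W_i$ (hence of $V$) of dimension at most $k-2$ and degree $O_{D',d}(1) = O_{D,d}(1)$. Combining $V^{\operatorname{smooth}}$ with all of these covers yields the desired decomposition of $V$: every $W$ appearing is a subvariety of $V$ of dimension at most $k-1$ (since $k-2 \leq k-1$) and degree $O_{D,d}(1)$, and the total number of pieces is $O_{D,d}(1)$ because each level of the recursion only inflates the count and the degree bounds by a constant depending on $D,d$, and the recursion has depth at most $k \leq d$.

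I don't anticipate any real obstacle beyond careful bookkeeping. The only small subtlety is making sure the implicit constants stay $O_{D,d}(1)$: the degrees of the varieties produced at depth $j$ depend only on the degrees at depth $j-1$ and on $d$, and since the depth of recursion is bounded by $k \leq d$, iterating the bound a bounded number of times keeps everything of size $O_{D,d}(1)$. The conclusion that each $W$ in the final cover lies in $V$ (rather than merely in $\C^d$) follows because Proposition \ref{most} produces subvarieties of $V$, and the inductive step produces subvarieties of $W_i \subset V$.
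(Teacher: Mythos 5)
Your proof is correct and is exactly the argument the paper intends: the paper simply states that the corollary follows by iterating Proposition \ref{most} with an induction on the dimension $k$, which is precisely what you carry out, including the bookkeeping that keeps the number and degrees of the pieces $O_{D,d}(1)$ because the recursion depth is at most $k \leq d$.
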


Finally, we address a technical point regarding the distinction between real and complex algebraic varieties.  If $p \in \ell \subset \R^d$ is a smooth real point of a $k$-dimensional complex algebraic variety $\ell_\C \in \C^d$, then it must have a $k$-dimensional complex tangent space $T_p \ell_\C \subset \C^d$, by definition.  However, its real tangent space $T_p \ell := T_p \ell_\C \cap \R^d$ may have dimension smaller than $k$.  For instance, in the case $k=1, d=2$, the complex line $\ell_\C := \{ (z,w) \in \C^2: z = iw \}$ has a smooth point at $(0,0)$ with a one-dimensional complex tangent space (also equal to $\ell_\C$), but the real tangent space is only zero-dimensional.  Of course, in this case, the real portion $\ell := \ell_\C \cap \R^2$ of the complex line is just a zero-dimensional point.  This phenomenon generalizes:

\begin{proposition}\label{complexity}  Let $V$ be a $k$-dimensional algebraic variety in $\C^d$ of degree at most $D$.  Then at least one of the following statements is true:
\begin{itemize}
\item[(i)]  The real points $V_\R$ of $V$ are covered by the smooth points $W^{\operatorname{smooth}}$ of $O_{D,d}(1)$ algebraic varieties $W$ of dimension at most $k-1$ and degree $O_{D,d}(1)$ which are contained in $V$.
\item[(ii)]  For every smooth real point $p \in V^{\operatorname{smooth}}_\R$ of $V$, the real tangent space $T_p V_\R := T_p V \cap \R^d$ is $k$-dimensional (and thus $T_p V$ is the complexification of $T_p V_\R$).  In particular, $V^{\operatorname{smooth}}_\R$ is $k$-dimensional.
\end{itemize}
\end{proposition}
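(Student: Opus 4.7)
The plan is to split into cases based on whether the variety $V$ coincides with its complex conjugate variety $\bar V$. Here $\bar V$ denotes the image of $V$ under coordinatewise complex conjugation $x\mapsto\bar x$ of $\C^d$; equivalently, if $P_1,\ldots,P_m$ cut out $V$, then $\bar V$ is cut out by the conjugate polynomials $\bar P_1,\ldots,\bar P_m$. The crucial elementary observation is that real points are conjugation-fixed, so $V_\R\subseteq V\cap\bar V$.

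If $V\neq\bar V$, I would prove (i). Since $V$ is irreducible of dimension $k$ and $\bar V$ does not contain $V$, the intersection $V\cap\bar V$ is a proper closed subset of $V$, so each of its irreducible components has dimension at most $k-1$. Lemma \ref{dec} furnishes $O_{D,d}(1)$ defining polynomials of degree at most $D$ for $V$, whose conjugates define $\bar V$; thus $V\cap\bar V$ is cut out by $O_{D,d}(1)$ polynomials of degree at most $D$. Lemma \ref{cont} then decomposes $V\cap\bar V$ into $O_{D,d}(1)$ subvarieties of dimension at most $k-1$ and degree $O_{D,d}(1)$ (each automatically contained in $V$), and a single application of Corollary \ref{decomp} to each of these covers them---and hence $V_\R$---by $O_{D,d}(1)$ sets of the form $W^{\operatorname{smooth}}$ required in (i).

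If instead $V=\bar V$, I would prove (ii). The ideal $I(V)\subseteq\C[x_1,\ldots,x_d]$ is then invariant under conjugation of coefficients, since $P\in I(V)$ implies $\bar P\in I(\bar V)=I(V)$. Writing $P=\tfrac12(P+\bar P)+\tfrac{1}{2i}(P-\bar P)=\operatorname{Re}(P)+i\operatorname{Im}(P)$ shows that $I(V)$ is generated over $\C$ by real polynomials; fix such a generating set $Q_1,\ldots,Q_n\in\R[x_1,\ldots,x_d]$. At any smooth real point $p\in V_\R^{\operatorname{smooth}}$, the Jacobian matrix $(\partial Q_i/\partial x_j)(p)$ has real entries, and by the Jacobian criterion for smoothness its complex rank equals $d-k$; since rank is field-independent for a real matrix, its real rank is $d-k$ as well. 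Its real kernel is exactly $T_pV_\R=T_pV\cap\R^d$, which therefore has real dimension $k$, and $T_pV$ (the complex kernel) is manifestly the complexification of this real kernel. This is precisely (ii).

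The step needing the most care is the bookkeeping of the degree and count bounds $O_{D,d}(1)$ through the chain Lemma \ref{dec} $\to$ Lemma \ref{cont} $\to$ Corollary \ref{decomp} in the first case, together with the verification that the produced subvarieties really sit inside $V$. Everything else is either linear algebra (for (ii)) or formal manipulation of ideals; the substantive content is isolating the algebraic dichotomy $V=\bar V$ versus $V\neq\bar V$, and once this is done the remaining work is routine.
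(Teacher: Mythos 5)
Your proof is correct and follows essentially the same route as the paper: the paper's auxiliary set $\tilde V$ (cut out by the defining polynomials together with their conjugates) is exactly your $V\cap\bar V$, the dichotomy $\tilde V\subsetneq V$ versus $\tilde V=V$ matches your $V\neq\bar V$ versus $V=\bar V$, and case (i) is handled by the same chain Lemma \ref{dec} $\to$ Lemma \ref{cont} $\to$ Corollary \ref{decomp}. Your treatment of case (ii) via real generators of $I(V)$ and the field-independence of the rank of a real Jacobian is a slightly more careful rendering of the paper's observation that the tangent space is conjugation-invariant and hence the complexification of its real points.
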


\begin{proof}  By Lemma \ref{dec}, $V$ can be cut out by polynomials $P_1,\ldots,P_m$ of degree $O_{D,d}(1)$ with $m = O_{D,d}(1)$.  Let $\tilde V$ be the algebraic set cut out by both $P_1,\ldots,P_m$ and their complex conjugates $\overline{P_1},\ldots,\overline{P_m}$, defined as the complex polynomials whose coefficients are the conjugates of those for $P_1,\ldots,P_m$.  Clearly $V$ and $\tilde V$ have the same real points.  If $\tilde V$ is strictly smaller than $V$, then it has dimension at most $k-1$, and by Lemma \ref{cont} and Corollary \ref{decomp} we are thus in case (i).  Now suppose instead that $V$ is equal to $\tilde V$.  Then at every smooth point $p$ of $V$ (or $\tilde V$), the $k$-dimensional complex tangent space $T_p V = T_p \tilde V$ is cut out by the orthogonal complements of the gradients $\nabla P_1,\ldots,\nabla P_m$ and their complex conjugates.  As such, it is manifestly closed with respect to complex conjugation, and is thus the complexification of its real counterpart $T_p \tilde V_\R$.  We are thus in case (ii).
\end{proof}

Note that by iterating the above proposition, we may assume that each of the varieties $W$ occuring in case (i) obey the properties stated in case (ii).

\section{Proof of main theorem}\label{main-proof}

We now prove Theorem \ref{main}.  We will prove this theorem by an induction on the quantity $d+k$.  The case $d+k=0$ is trivial, so we assume inductively that $d+k \geq 1$ and that the claim has already been proven for all smaller values of $d+k$.

We can dispose of the easy case $k=0$, because in this case $L$ consists entirely of points, and in particular each $\ell$ in $L$ is incident to at most one point in $P$, giving the bound $|\I| \leq |L|$ which is acceptable.  Hence we may assume that $k \geq 1$, and thus $d \geq 2$.

We now perform a technical reduction to eliminate some distinctions between the real and complex forms of the variety $\ell$.  We may apply Proposition \ref{complexity} to each variety $\ell$.  Those varieties $\ell$ which obey conclusion (i) of that proposition can be easily handled by the induction hypothesis, since the varieties $W$ arising from that conclusion have dimension strictly less than $k$.  Thus we may restrict attention to varieties which obey conclusion (ii), namely that for every smooth real point $p$ in $\ell$ (and in particular, for all $(p,\ell) \in \I$), the real tangent space $T_p \ell$ has full dimension $k$.

We now divide into two subcases: the case $d>2k$ and the case $d=2k$, and deduce each case from the induction hypothesis.  The main case is the latter; the former case will be obtainable from the induction hypothesis by a standard projection argument.

\subsection{The case of excessively large ambient dimension}\label{large-sec}

We first deal with the case $d>2k$.  Here, we can use a generic projection argument to deduce the theorem from the induction hypothesis.  Indeed, fix $\eps, C_0, P, L$, and let $\pi: \R^d \to \R^{2k}$ be a generic linear transformation (avoiding a finite number of positive codimension subvarieties of the space $\operatorname{Hom}(\R^d,\R^{2k}) \equiv \R^{2kd}$ of linear transformations from $\R^d$ to $\R^{2k}$).  Since $k \geq 1$, we see that for any two distinct points $p, p' \in P$, the set of transformations $\pi$ for which $\pi(p)=\pi(p')$ has positive codimension in $\operatorname{Hom}(\R^d,\R^{2k})$.  Hence for generic $\pi$, the map from $P$ to $\pi(P)$ is bijective.

Similarly, let $\ell, \ell'$ be two distinct $k$-dimensional varieties in $L$.  Then generically $\pi(\ell), \pi(\ell')$ will be $k$-dimensional varieties in $\R^{2k}$.
We claim that for generic $\pi$, the varieties $\pi(\ell), \pi(\ell')$ are distinct.  Indeed, since $\ell, \ell'$ are distinct varieties of the same dimension, there exists a point $p$ on $\ell'$ that does not lie on $\ell$.  Since $\ell$ has codimension $d-k > d-2k$, we conclude that for generic $\pi$, the $d-2k$-dimensional plane $\pi^{-1}(\pi(p))$ does not intersect $\ell'$, and the claim follows.  Thus the map $\pi: L \to \pi(L)$ is bijective.

Next, let $(p,\ell) \in \I$. Clearly, $\pi(p) \in \pi(\ell)$, and so $\pi$ induces a bijection from $\I$ to some set of incidences $\pi(\I) \subset I(\pi(P), \pi(L))$.

To sumarise, we have established that for generic $\pi$, 
\begin{align*}
|\pi(P)| &= |P| \\
|\pi(L)| &= |L| \\
|\pi(\I)| &= |\I|.
\end{align*}
We would now like to apply the induction hypothesis to the configuration of points $\pi(P)$, varieties $\pi(L)$, and incidences $\pi(\I)$, which will clearly establish the desired claim.  To do this, we need to verify that $\pi(P)$, $\pi(L)$, $\pi(\I)$ obey the axioms (i)-(v) for generic $\pi$.

If $\ell \in L$, then $\ell$ is a $k$-dimensional variety of degree at most $C_0$, and so generically $\pi(\ell)$ will also be a $k$-dimensional variety of degree at most $C_0$, which gives Axiom (i).

Axioms (ii) and (iii) for $\pi(P), \pi(L), \pi(\I)$ are generically inherited from those of $P, L, \I$ thanks to the bijection between $\I$ and $\pi(\I)$.

Now we turn to Axiom (iv).  Fix $(p,\ell) \in \I$, then $p$ is a smooth point of $\ell$.  Since $d > 2k$ and $k \geq 1$, it will generically hold that the codimension $2k$ affine space $\pi^{-1}(\pi(p))$ will intersect the codimension $d-k$ variety $\ell$ only at $p$.  Thus, $\pi(p)$ will generically be a smooth point of $\pi(\ell)$, which gives Axiom (iv).

Finally, if $(p,\ell), (p,\ell') \in \I$, then by hypothesis, the $k$-dimensional tangent spaces $T_p \ell, T_p \ell'$  are transverse and thus span a $2k$-dimensional affine space through $p$.  Generically, $\pi$ will be bijective from this space to $\C^{2k}$, and thus $T_{\pi(p)} \pi(\ell) = \pi(T_p \ell)$ and $T_{\pi(p)} \pi'(\ell') = \pi(T_p \ell')$ remain transverse.  This gives Axiom (v).

Now that all the axioms are verified, the induction hypothesis gives
$$ |\pi(\I)| \leq A |\pi(P)|^{\frac{2}{3}+\eps} |\pi(L)|^{\frac{2}{3}} + \frac{3}{2} |\pi(P)| + \frac{3}{2} |\pi(L)|$$
for generic $\pi$ and some $A$ depending only on $k,\eps,C_0$, and the claim follows.

\subsection{The case of sharp ambient dimension}

It remains to handle the case when $d=2k$ and $k \geq 1$.  Here we fix $d, k, C_0, \eps$, and allow all implied constants in the asymptotic notation to depend on these parameters.  We will need some additional constants
$$ C_3 > C_2 > C_1 > C_0,$$
where
\begin{itemize}
\item $C_1$ is assumed to be sufficiently large depending on $C_0$, $k$ and $\eps$;
\item $C_2$ is assumed to be sufficiently large depending on $C_1, C_0, k$, and $\eps$; and
\item $C_3$ is assumed to be sufficiently large depending on $C_2,C_1,C_0, k$, and $\eps$.
\end{itemize}

For suitable choices of $C_1,C_2,C_3$, we will establish the inequality
\begin{equation}\label{scl}
 |\I| \leq C_3 |P|^{2/3+\eps} |L|^{2/3} + \frac{3}{2} |P| + \frac{3}{2} |L|
\end{equation}
by an induction on the number of points $P$.  The inequality is trivial for $|P|=0$, so we assume that $|P| \geq 1$ and that the claim has been proven for all smaller sets of points $P$.

Suppose $P, L, \I$ obey all the specified axioms.  We begin with two standard trivial bounds:

\begin{lemma}[Trivial bounds]\label{triv-lem}   We have
\begin{equation}\label{itriv}
|\I| \leq C_0^{1/2} |P| |L|^{1/2} + |L|
\end{equation}
and
$$ |\I| \leq C_0^{1/2} |L| |P|^{1/2} + |P|.$$
\end{lemma}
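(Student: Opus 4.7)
The plan is to establish both bounds by a standard Cauchy--Schwarz double-counting argument, using axioms (ii) and (iii) respectively to control collisions. The two inequalities are symmetric in the roles of $P$ and $L$ (with axioms (ii) and (iii) swapping roles), so I will describe the first in detail and note that the second follows in the same way.

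For the first bound, for each $\ell \in L$ let $d(\ell) := |\{ p \in P : (p,\ell) \in \I\}|$, so that $|\I| = \sum_{\ell \in L} d(\ell)$. I would count ordered triples $(p,p',\ell)$ of distinct points $p \neq p'$ in $P$ together with a common variety $\ell \in L$ such that $(p,\ell),(p',\ell) \in \I$. Counting by $\ell$ gives $\sum_{\ell} d(\ell)(d(\ell)-1)$, and counting by $\{p,p'\}$ and invoking axiom (iii) gives an upper bound of $C_0 |P|(|P|-1) \le C_0 |P|^2$. Rearranging yields $\sum_\ell d(\ell)^2 \le C_0 |P|^2 + |\I|$.

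Cauchy--Schwarz then gives $|\I|^2 = \bigl( \sum_\ell d(\ell) \bigr)^2 \le |L| \sum_\ell d(\ell)^2 \le |L|\bigl(C_0 |P|^2 + |\I|\bigr)$. Viewing this as a quadratic inequality in $|\I|$ and bounding the positive root by $|L| + C_0^{1/2} |P| |L|^{1/2}$ (using $\sqrt{a+b} \le \sqrt a + \sqrt b$) produces the claimed inequality $|\I| \le C_0^{1/2} |P| |L|^{1/2} + |L|$.

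The second bound is obtained by the symmetric argument: let $r(p) := |\{\ell \in L : (p,\ell) \in \I\}|$, count triples $(p,\ell,\ell')$ with $\ell \neq \ell'$ incident to a common point $p$, and use axiom (ii) in place of (iii) to bound this by $C_0 |L|^2$. Cauchy--Schwarz against $|P|$ then yields $|\I|^2 \le |P|(C_0 |L|^2 + |\I|)$, and the same quadratic-inequality step gives $|\I| \le C_0^{1/2} |L| |P|^{1/2} + |P|$. There is no real obstacle here; the only thing to watch is the passage from the quadratic inequality to the stated linear bound, which requires the elementary estimate $\tfrac12(1+\sqrt{1+4x}) \le 1 + \sqrt{x}$.
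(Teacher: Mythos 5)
Your argument is correct and is essentially the paper's own proof: the same double count of triples $(p,p',\ell)$ controlled by Axiom (iii), the same Cauchy--Schwarz step giving $|\I|^2 \leq |L|(C_0|P|^2+|\I|)$, and the same resolution of the resulting quadratic inequality, with the second bound obtained by swapping $P$ and $L$ and using Axiom (ii). No gaps.
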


\begin{proof}  Consider the set $\Sigma$ of triples $(p,p',\ell) \in P \times P \times L$ such that $(p,\ell), (p',\ell) \in \I$.  If $p \neq p'$, then from Axiom (iii) this pair contributes at most $C_0$ triples to $\Sigma$, while the case $p=p'$ contributes exactly $|\I|$ triples.  We thus have
$$ |\Sigma| \leq C_0 |P|^2 + |\I|.$$
 On the other hand, from Cauchy-Schwarz 
$$ |\Sigma| \geq \frac{|\I|^2}{|L|}$$
and thus
$$ |\I|^2 - |L| |\I| \leq C_0 |P|^2 |L|.$$
We rearrange this as
$$ \left(|\I|-\frac{|L|}{2}\right)^2 \leq \frac{|L|^2}{4} + C_0 |P|^2 |L|.$$
The right-hand side is bounded by $\left(\frac{|L|}{2} + C_0^{1/2} |P| |L|^{1/2}\right)^2$, which gives \eqref{itriv}.  The second bound is proven similarly by swapping the roles of $P$ and $L$ (and using Axiom (ii) instead of Axiom (iii)).
\end{proof}

One can also view this lemma as a special case of the classical K\~ov\'ar-S\'os-Tur\'an theorem \cite{kst}.

In view of this lemma, we can obtain the desired bound \eqref{scl} whenever we are outside the regime
\begin{equation}\label{regime}
 C_2 |P|^{1/2} \leq |L| \leq C_2^{-1} |P|^2
\end{equation}
provided that $C_2$ is large enough depending on $C_0$, and $C_3$ is large enough depending on $C_0, C_2$.  Thus we may assume without loss of generality that we are in the regime \eqref{regime}.

Following Guth and Katz \cite{GK}, the next step is to apply the polynomial ham sandwich theorem of Stone and Tukey \cite{ST}. We will sketch their argument here. For more details we refer to the excellent review of the polynomial decomposition method in \cite{kaplan}.

\begin{theorem}[Polynomial ham sandwich theorem]\label{hst} Let $D, M, d \geq 1$ be integers with $M = \binom{D+d}{d} -1$, and
let $S_1, \ldots, S_M$ be finite sets of points of $\R^d$.  Then there exists a polynomial $Q: \R^d \to \R$ of degree at most $D$ such that the real algebraic hypersurface $\{ x \in \R^d: Q(x)=0\}$ bisects each of the $S_1,\ldots,S_M$, in the sense that
$$ |\{ x \in S_i: Q(x) < 0 \}| \leq \frac{1}{2} |S_i|$$
and
$$ |\{ x \in S_i: Q(x) > 0 \}| \leq \frac{1}{2} |S_i|$$
for all $i=1,\ldots,M$.
\end{theorem}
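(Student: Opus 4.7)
I would prove the theorem by lifting to a higher dimensional space via the Veronese embedding, reducing it to the classical measure-theoretic ham-sandwich theorem (which is itself a consequence of the Borsuk--Ulam theorem).

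Set $N := M+1 = \binom{D+d}{d}$, and let $v : \R^d \to \R^N$ be the Veronese map whose coordinates list all monomials of degree at most $D$ in $x_1,\ldots,x_d$ (including the constant monomial $1$). Any polynomial $Q$ of degree at most $D$ on $\R^d$ can be written uniquely as $Q(x) = a \cdot v(x)$ for some coefficient vector $a \in \R^N$, and conversely every $a$ arises this way. The hypersurface $\{Q = 0\}$ is therefore the $v$-preimage of the central hyperplane $H_a := \{y \in \R^N : a \cdot y = 0\}$, and the signs of $Q$ on $\R^d$ match those of $y \mapsto a \cdot y$ on $v(\R^d)$. Pushing each $S_i$ forward to the multiset $T_i := v(S_i) \subset \R^N$, the problem reduces to: given $M = N-1$ finite multisets in $\R^N$, find a nonzero $a \in \R^N$ such that the central hyperplane $H_a$ bisects all of them.

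To find such an $a$, I would replace each point of $T_i$ by a small closed ball of equal mass to obtain an absolutely continuous probability measure $\mu_i^{(\eps)}$, and then define the odd continuous ``imbalance'' functions
\begin{equation*}
f_i^{(\eps)}(a) := \mu_i^{(\eps)}(\{y : a \cdot y > 0\}) - \mu_i^{(\eps)}(\{y : a \cdot y < 0\}), \qquad a \in S^{N-1}.
\end{equation*}
Assembling these into a continuous odd map $F^{(\eps)} := (f_1^{(\eps)}, \ldots, f_{N-1}^{(\eps)}) : S^{N-1} \to \R^{N-1}$, the Borsuk--Ulam theorem (for the sphere $S^{N-1}$ mapping into $\R^{N-1}$) supplies a zero $a_\eps \in S^{N-1}$ of $F^{(\eps)}$. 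Compactness of $S^{N-1}$ lets me extract a subsequence $a_\eps \to a_0$ as $\eps \to 0$, and a standard limiting argument (any open half-space containing strictly more than half the points of $T_i$ would persist for small $\eps$, contradicting $f_i^{(\eps)}(a_\eps) = 0$) shows that the central hyperplane $H_{a_0}$ bisects every $T_i$. Pulling back via $v$, the polynomial $Q(x) := a_0 \cdot v(x)$ is the desired bisector.

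The main obstacle is minor but must be addressed: the discrete imbalance functional $a \mapsto |\{y \in T_i : a \cdot y > 0\}| - |\{y \in T_i : a \cdot y < 0\}|$ is not continuous on $S^{N-1}$, because points can cross $H_a$ as $a$ varies, which is precisely why Borsuk--Ulam cannot be invoked directly on the discrete problem. The ball-smoothing plus compactness argument above is the standard way around this, and is the only genuinely technical point in the proof.
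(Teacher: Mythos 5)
Your proposal is correct, and it is precisely the standard Stone--Tukey argument that the paper relies on: the paper itself states Theorem \ref{hst} without proof, citing \cite{ST} and the exposition in \cite{kaplan}, and those sources prove it exactly as you do, by identifying degree-$\leq D$ polynomials with linear functionals on the Veronese image in $\R^{N}$ (with $N=\binom{D+d}{d}$) and applying the Borsuk--Ulam theorem to the odd map $S^{N-1}\to\R^{N-1}$ built from the smoothed imbalance functions. Your dimension count ($M=N-1$ sets, central hyperplanes parametrized by $S^{N-1}$) and the ball-smoothing-plus-compactness limit to handle the discreteness are both handled correctly, so there is nothing to add.
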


As observed in \cite{GK}, we may iterate this to obtain

\begin{corollary}[Cell decomposition]\label{cell-decomp}\cite{GK}  Let $D, d \geq 1$ be integers, and let $P$ be a finite set of points in $\R^d$.  Then there exists a decomposition
$$ \R^d = \{ Q = 0 \} \cup \Omega_1 \cup \ldots \cup \Omega_M$$
where $Q: \R^d \to \R$ is a polynomial of degree at most $D$, $M = O_d(D^d)$, and $\Omega_1,\ldots,\Omega_M$ are open sets bounded by $\{Q=0\}$ (i.e. the topological boundary of the $\Omega_i$-s are contained in $\{Q=0\}$), such that $|P \cap \Omega_i| = O_d(|P|/D^d)$ for each $1 \leq i \leq M$.
\end{corollary}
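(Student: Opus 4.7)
The plan is to apply Theorem \ref{hst} iteratively, producing a sequence of polynomials $Q_1, Q_2, \ldots, Q_J$ whose product $Q := Q_1 Q_2 \cdots Q_J$ will be the desired polynomial of degree at most $D$. The cells $\Omega_i$ will be the sign classes of the tuple $(Q_1, \ldots, Q_J)$ viewed as a function $\R^d \to \{-,0,+\}^J$.

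At stage $j \geq 1$, I maintain a partition of the points of $P$ not yet absorbed into $\{Q_1 \cdots Q_{j-1}=0\}$ into at most $2^{j-1}$ subsets $S_1^{(j-1)}, \ldots, S_{2^{j-1}}^{(j-1)}$ indexed by the sign pattern of $(Q_1,\ldots,Q_{j-1})$, each of cardinality at most $|P|/2^{j-1}$. I apply Theorem \ref{hst} to these $2^{j-1}$ sets simultaneously with $D_j$ chosen to be the least positive integer satisfying $\binom{D_j+d}{d}-1 \geq 2^{j-1}$, yielding a polynomial $Q_j$ of degree at most $D_j$ that bisects each $S_i^{(j-1)}$. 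Refining the partition by the sign of $Q_j$ produces at most $2^j$ subsets, each of size at most $|P|/2^j$. From $\binom{n+d}{d} = \Theta_d(n^d)$ one has $D_j \ll_d 2^{(j-1)/d}$.

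I stop at the largest $J$ for which $\sum_{j=1}^J D_j \leq D$. Since the $D_j$ grow geometrically with ratio $2^{1/d}$, one has $\sum_{j=1}^J D_j \asymp_d 2^{J/d}$, so this choice yields $2^J \asymp_d D^d$. Setting $Q := \prod_{j=1}^J Q_j$ gives $\deg Q \leq D$. I define the cells $\Omega_\varepsilon$, indexed by $\varepsilon \in \{-,+\}^J$, by
\[
\Omega_\varepsilon := \bigcap_{j=1}^J \{x \in \R^d : \varepsilon_j Q_j(x) > 0\}.
\]
Each $\Omega_\varepsilon$ is open as an intersection of open half-spaces; there are at most $2^J = O_d(D^d)$ of them; their topological boundaries lie in $\bigcup_j \{Q_j=0\} = \{Q=0\}$; their union together with $\{Q=0\}$ exhausts $\R^d$; and by construction $|P \cap \Omega_\varepsilon| \leq |P|/2^J = O_d(|P|/D^d)$.

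The only real obstacle is the bookkeeping match between the degree budget and the number of bisections: because each stage doubles the number of cells while $D_j$ grows only by a factor $2^{1/d}$, summing the geometric series shows that $O_d(D^d)$ cells can be produced with total degree at most $D$. No additional algebro-geometric input beyond Theorem \ref{hst} and the standard asymptotic $\binom{n+d}{d} = \Theta_d(n^d)$ is required.
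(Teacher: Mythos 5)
Your proof is correct and is essentially the paper's argument: both iterate the polynomial ham sandwich theorem, doubling the number of cells at each stage while the bisecting polynomial's degree grows only like $2^{j/d}$, so that the product polynomial has total degree $O$ of the last stage's degree and the geometric-series bookkeeping gives $2^{J}\asymp_d D^d$ cells. The paper packages this as an induction on powers of two $M$ producing a single $Q_M$ of degree $\ll_d M^{1/d}$, but the content is identical to your explicit product $Q=Q_1\cdots Q_J$ with cells given by sign patterns.
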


\begin{proof}  We may assume that $D$ is large depending on $d$, as the claim is trivial otherwise.
By Theorem \ref{hst} and induction on $M$, there is a constant $A = A_d > 0$ such that for every power of two $M \geq 1$ we can find a partition
$$ \R^d = \{Q_M = 0 \} \cup \Omega_{M,1} \cup \ldots \cup \Omega_{M,M}$$
where $Q_M: \R^d \to \R$ is a polynomial of degree at most $A M^{1/d}$, and $\Omega_{M,1},\ldots,\Omega_{M,M}$ are open sets bounded by $\{Q_M=0\}$ such that $|P \cap \Omega_i| \leq |P|/M$ for each $1 \leq i \leq M$.  The claim follows by selecting $M$ comparable to a large multiple of $D^{1/d}$.
\end{proof}

We apply this corollary to our situation with $D := C_1$, to give a decomposition
$$ \R^{2k} = \{ Q = 0 \} \cup \Omega_1 \cup \ldots \cup \Omega_M$$
where $Q: \R^{2k} \to \R$ has degree at most $C_1$, $M = O( C_1^{2k} )$, and the $\Omega_i$ are open sets bounded by $\{Q=0\}$ such that $|P \cap \Omega_i| = O( |P| / C_1^{2k} )$ for each $i$.  (Recall that we allow implied constants in the $O()$ notation to depend on $k$.) In particular, we have $|P \cap \Omega_i| < |P|$ for each $i$, which will allow us to apply the induction hypothesis to each $P \cap Q_i$.

For each $1 \leq i \leq M$, let $L_i$ be the sets $\ell$ in $L$ that have non-empty intersection with $\Omega_i$.  Clearly 
$$ |\I| = |\I \cap I(P \cap \{Q=0\}, L)| + \sum_{i=1}^M |\I \cap I( P \cap \Omega_i, L_i )|.$$
We first estimate the latter sum.  Applying the induction hypothesis, we have
\begin{align*}
|\I \cap I( P \cap \Omega_i, L_i )| &\leq C_3 |P \cap \Omega_i|^{\frac{2}{3}+\eps} |L_i|^{\frac{2}{3}} + \frac{3}{2}|P \cap \Omega_i| + \frac{3}{2}|L_i| \\
&\leq C_1^{-\frac{4k}{3}-2k\eps} C_3 |P|^{\frac{2}{3}+\eps} |L_i|^{\frac{2}{3}} + \frac{3}{2}|P/M| + \frac{3}{2}|L_i|.
\end{align*}
Note the factor of $C_1^{-2k\eps}$ in the main term, which will be crucial in closing the induction.

Let $\ell$ be a variety in $L$, then $\ell$ is $k$-dimensional and has degree $O(1)$.  Meanwhile, the set $\{Q=0\}$ is a hypersurface of degree at most $C_1$.  We conclude that $\ell$ either lies in $\{Q=0\}$, or intersects $\{Q=0\}$ in an algebraic set of dimension at most $k-1$.  In the former case, $\ell$ cannot belong to any of the $L_i$.  In the latter case we apply a generalization of a classical result established independently by Oleinik and Petrovsky \cite{OP}, Milnor \cite{Mi}, and Thom \cite{Th}, such that the number of connected components of $\ell \backslash \{Q=0\}$ is at most $O( C_1^k )$; we give a proof of this fact in Theorem \ref{pad}. Recently a more general bound was proved by Barone and Basu \cite{basu}. 

From Theorem \ref{pad} we will use here that $\ell$ can belong to at most $O(C_1^k)$ of the sets $L_i$.  

This implies that
$$ \sum_{i=1}^M |L_i| \ll C_1^k |L|$$
and thus by H\"older's inequality and the bound $M = O(C_1^{2k})$
$$ \sum_{i=1}^M |L_i|^{\frac{2}{3}} \ll C_1^{\frac{4k}{3}} |L|^{\frac{2}{3}}.$$
Inserting this into our preceding bounds, we conclude that
$$ \sum_{i=1}^M |\I \cap I(P \cap \Omega_i, L_i)| \ll C_1^{-2k\eps} C_3 |P|^{\frac{2}{3}+\eps} |L|^{\frac{2}{3}} + |P| + C_1^k |L|.$$
Using the hypothesis \eqref{regime}, we thus conclude (if $C_1$ is large enough depending on $k, \eps$, and $C_2$ is large enough depending on $C_1$, $k$, $\eps$) we have
$$ \sum_{i=1}^M |\I \cap I(P \cap \Omega_i, L_i)| \leq \frac{1}{2} C_3 |P|^{\frac{2}{3}+\eps} |L|^{\frac{2}{3}}.$$
To close the induction, it will thus suffice (again by using \eqref{regime}) to show that
\begin{equation}\label{close}
|\I \cap I(P \cap \{Q=0\}, L)| \ll_{C_1} |P|^{\frac{2}{3}+\eps} |L|^{\frac{2}{3}} + |P| + |L|.
\end{equation}

\begin{remark}  A modification of the above argument shows that if one applied Corollary \ref{cell-decomp} not with a bounded degree $D=C_1$, but instead with a degree $D$ comparable to $(|P|^{2/3} |L|^{-1/3})^{1/k}$, then one could control the contribution of the cell interiors $\Omega_i$ by the trivial bound \eqref{itriv}, rather than the inductive hypothesis, thus removing the need to concede an epsilon in the exponents.  However, the price one pays for this is that the hypersurface $\{Q=0\}$ acquires a much higher degree, and the simple arguments given below to handle the incidences on this hypersurface are insufficient to give good bounds (except in the original Szemer\'edi-Trotter context when $k=1$ and $d=2$).  Nevertheless, it may well be that a more careful analysis, using efficient quantitative bounds on the geometry of high degree varieties, may be able to recover good bounds for this strategy, and in particular in removing the epsilon loss in Theorem \ref{main}.
\end{remark}

For inductive reasons, it will be convenient to prove the following generalisation:

\begin{proposition}  Let the notation and hypotheses be as above.  (In particular, we are assuming Theorem \ref{main} to already be proven for all smaller values of $d+k$, and continue to allow all implied constants to depend on $k$.)  Let $0 \leq r < 2k$, and let $\Sigma$ be an $r$-dimensional variety in $\R^{2k}$ of degree at most $D$.  Then
$$|\I \cap I(P \cap \Sigma, L)| \ll_{D} |P|^{\frac{2}{3}+\eps} |L|^{\frac{2}{3}} + |P| + |L|.$$
\end{proposition}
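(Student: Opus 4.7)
The plan is to prove the proposition by induction on $r$, within which the main theorem's inductive hypothesis on $|P|$ (at $d=2k$, $k$, which is available by the outer structure of the proof) is invoked to handle the incidences lying inside the cells of a polynomial decomposition, while the inductive hypothesis on $r$ handles the residual boundary contributions.

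For the base case $r = 0$, we have $|P \cap \Sigma| = O_D(1)$, so Lemma~\ref{triv-lem} yields $|\I \cap I(P \cap \Sigma, L)| \ll_D |L|$, which is acceptable. For $r \geq 1$, first apply Proposition~\ref{complexity} (combined with Corollary~\ref{decomp} to peel off the singular locus) to reduce to the case in which every smooth real point of $\Sigma$ carries an $r$-dimensional real tangent space; the complementary pieces, which lie inside $O_D(1)$ subvarieties of dimension $\leq r - 1$, are absorbed into the inductive hypothesis on $r$. Now split $L = L_1 \sqcup L_2$ according to whether the complexification of $\ell$ is contained in that of $\Sigma$. For $\ell \in L_1$ and any incidence $(p, \ell) \in \I$ with $p$ smooth on $\Sigma$, the $k$-dimensional tangent space $T_p \ell$ (of full dimension by the parallel reduction via Proposition~\ref{complexity} already applied to each $\ell$ at the start of the main proof) sits inside the $r$-dimensional $T_p \Sigma$; since $r < 2k$, two distinct $\ell, \ell' \in L_1$ through a common $p$ would furnish two $k$-dimensional subspaces of an $r$-dimensional ambient space meeting only at $\{p\}$, contradicting Axiom~(v). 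Hence each $p \in P \cap \Sigma$ meets at most one $\ell \in L_1$, contributing $\leq |P|$ to $|\I|$.

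For $L_2$, I imitate the polynomial partitioning of the proof of \eqref{scl}: apply Corollary~\ref{cell-decomp} to $P$ with a degree parameter $D'$ (a constant to be chosen large in terms of $D, \eps, k$), yielding a polynomial $Q'$ of degree $\leq D'$ and a decomposition $\R^{2k} = \{Q'=0\} \cup \Omega_1 \cup \cdots \cup \Omega_M$ with $M = O(D'^{2k})$ and $|P \cap \Omega_i| = O(|P|/D'^{2k})$. In each $\Omega_i$, I invoke the main theorem by induction on $|P|$ applied to the configuration $(P \cap \Omega_i, L_i)$, where $L_i$ collects the varieties in $L_2$ meeting $\Omega_i$; since $P \cap \Sigma \cap \Omega_i \subset P \cap \Omega_i$, this also bounds the incidences on $\Sigma$ inside the cell. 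Summing with $\sum_i |L_i| \ll D'^k |L|$ (Theorem~\ref{pad}) and H\"older's inequality, exactly as in the main argument, produces a total cell contribution of $O(D'^{-2k\eps}) C_3 |P|^{2/3+\eps}|L|^{2/3} + O(|P|) + O(D'^k |L|)$, absorbable into the desired bound upon choosing $D'$ large in terms of $\eps, k$ and invoking \eqref{regime}. For the boundary $P \cap \Sigma \cap \{Q'=0\}$, provided $Q'$ does not vanish identically on $\Sigma$, the algebraic set $\Sigma \cap \{Q'=0\}$ has dimension $\leq r - 1$ and degree $O_{D, D'}(1)$; Lemma~\ref{cont} decomposes it into $O_{D, D'}(1)$ subvarieties of dimension $\leq r - 1$, to each of which the inductive hypothesis on $r$ delivers the required bound.

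The principal obstacle is arranging that the ham-sandwich polynomial $Q'$ does not vanish identically on $\Sigma$, as otherwise $\Sigma \cap \{Q'=0\} = \Sigma$ retains full dimension $r$ and the induction fails to close. This is resolvable by a standard technical adjustment: one may factor out from $Q'$ those irreducible components of $\{Q' = 0\}$ whose zero loci contain $\Sigma$ (bounded in number since $\Sigma$ has bounded degree), or iterate the partitioning with a suitable polynomial filtration $Q', Q'', \ldots$ constructed via directional derivatives, in the spirit of the filtration used in the proof of Proposition~\ref{most}, to replace $Q'$ by a polynomial of bounded degree that is non-vanishing on $\Sigma$ while still providing an efficient cell decomposition.
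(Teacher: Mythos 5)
Your route is genuinely different from the paper's, and it has two real gaps. The paper does not run the partitioning machine a second time on $\Sigma$: instead, after disposing of the singular locus and of the varieties contained in $\Sigma$ (exactly as you do), it intersects each remaining $\ell$ with $\Sigma$, uses Corollary \ref{decomp} to write $\ell \cap \Sigma$ as a union of smooth loci of $O_D(1)$ varieties $\ell_{k',j}$ of dimension $k'<k$, verifies that $P$ together with the (distinct, by Axiom (ii)) varieties $\ell_{k',j}$ satisfies Axioms (i)--(v) with $C_0$ replaced by $O_{C_0,D}(1)$, and then invokes the \emph{outer} induction on $d+k$, i.e.\ Theorem \ref{main} with $k$ replaced by $k'$. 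This drop in the dimension of the \emph{varieties}, rather than a fresh cell decomposition, is the key idea your proposal is missing, and it sidesteps both of the difficulties below.

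The first gap is the coefficient of $C_3$. Your cell contribution is $O(D'^{-2k\eps})\,C_3\,|P|^{2/3+\eps}|L|^{2/3}+\cdots$, which you declare ``absorbable into the desired bound.'' It is not: the proposition asserts a bound $\ll_D |P|^{2/3+\eps}|L|^{2/3}+|P|+|L|$ whose implied constant must be independent of $C_3$, since in the main argument $C_3$ is chosen large \emph{depending on} the implied constant in \eqref{close}. To salvage this you would have to restate the proposition with an explicit term $\delta\, C_3 |P|^{2/3+\eps}|L|^{2/3}$ and track $\delta$ through your induction on $r$; but at each level the boundary $\Sigma\cap\{Q'=0\}$ splits into $N=O_{D,D'}(1)$ subvarieties, each receiving the level-$(r-1)$ bound, so $\delta_r \approx cD'^{-2k\eps}+N\delta_{r-1}$ with $N$ growing in $D'$. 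Closing this requires a top-down choice of partition degrees across the levels of $r$, none of which appears in your write-up; as written the induction on $r$ does not close.

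The second gap is the one you flag yourself: the ham-sandwich polynomial $Q'$ may vanish identically on $\Sigma$, in which case every point of $P$ lies on the boundary and no dimension is gained. Neither proposed repair works as stated. Deleting the irreducible factors of $Q'$ whose zero sets contain $\Sigma$ enlarges the cells and destroys the equidistribution $|P\cap\Omega_i|=O(|P|/D'^{2k})$, which is the only thing the partition is for; and passing to directional derivatives, as in Proposition \ref{most}, produces a polynomial with no bisection property at all. Producing a bounded-degree partitioning polynomial that equidistributes $P$ while remaining nonvanishing on a prescribed positive-codimension variety is a substantive problem (the subject of later work on partitioning on varieties), not a standard technical adjustment. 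The portions of your argument that do coincide with the paper's --- the base case, the reduction to smooth points of $\Sigma$ via Proposition \ref{most}, and the transversality argument showing each point meets at most one $\ell$ contained in $\Sigma$ --- are correct.
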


Clearly, \eqref{close} follows by specialising to the case $r=2k-1$, $D=C_1$, and $\Sigma=\{Q=0\}$.

\begin{proof}  We induct on $r$.  If $r=0$, then $\Sigma$ is a single point, and so each set $\ell$ in $L$ has at most one incidence in $P \cap \Sigma$, giving the net bound $|\I| \leq |L|$, which is acceptable.

Now suppose that $1 \leq r < 2k-1$, and that the claim has already been proven for smaller values of $r$.

We may of course delete all points of $P$ outside of $\Sigma$.  If $p$ is a point in $\Sigma$, then $p$ is either a smooth point in $\Sigma$ or a singular point.  Let us first deal with the contribution of the latter case.  As $\Sigma$ is an $r$-dimensional variety of degree at most $D$, the singular points in $\Sigma$ lie in a union of $O_{D}(1)$ varieties $\Sigma_1,\ldots,\Sigma_m$ of degree $O_D(1)$ and 	dimension strictly less than $r$ (see Proposition \ref{most}).  By the induction hypothesis, we have
$$
|\I \cap I(P \cap \Sigma_i, L)| \ll_{D} |P|^{\frac{2}{3}+\eps} |L|^{\frac{2}{3}} + |P| + |L|$$
for each such variety $\Sigma_i$, and so on summing in $i$ we see that the contribution of the singular points is acceptable.

By deleting the singular points of $\Sigma$ from $P$, we may thus assume without loss of generality that all the points in $P$ are smooth points of $\Sigma$.  For each $\ell \in L$, consider the intersection $\ell \cap \Sigma$.  As $\ell$ is a variety of degree $k$, we see that $\ell$ is either contained in $\Sigma$, or $\ell \cap \Sigma$ will be a algebraic set of dimension strictly less than $k$.

Consider the contribution of the first case when $\ell$ is contained in $\Sigma$.  If we have two distinct incidences $(p,\ell), (p,\ell') \in \I$ such that $\ell, \ell'$ both lie in $\Sigma$, then the $k$-dimensional tangent spaces $T_p \ell, T_p \ell'$ lie in the $r$-dimensional space $T_p \Sigma$.  On the other hand, by Axiom (iv), $T_p \ell$ and $T_p \ell'$ are transverse.  Since $r<2k$, this is a contradiction.  Thus, each point in $P$ is incident to at most one variety $\ell \in L$ that lies in $\Sigma$, and so there are at most $|L|$ incidences that come from this case.

We may thus assume that each variety $\ell \in L$ intersects $\Sigma$ in an algebraic set of dimension strictly less than $k$.  Since $\ell$ has degree at most $C_0$, and $\Sigma$ has degree at most $D$, we see from Corollary \ref{decomp} that $\ell \cap \Sigma$ is the union of the smooth points of $O_{D}(1)$ algebraic varieties of dimension between $0$ and $k-1$ and degree $O_{D}(1)$.

Thus we may write $\ell \cap \Sigma = \bigcup_{k' = 0}^{k-1} \bigcup_{j=1}^J \ell_{k',j}^{\operatorname{smooth}}$ for some $J=O_{D}(1)$, where for each $0 \leq k' < k-1$ and $1 \leq j \leq J$, $\ell_{k',j}$ is either empty, or is an algebraic variety of dimension $k'$ and degree $O_{D}(1)$, and $\ell_{k',j}^{\operatorname{smooth}}$ are the smooth points of $\ell_{k',j}$.  Note that by padding the decomposition with empty varieties, we may assume that $J$ is independent of $\ell$. We may then estimate
$$ |\I \cap I(P \cap \Sigma, L)| \leq \sum_{k'=0}^{k-1} \sum_{j=1}^J |\I_{k',j}|$$
where $\I_{k',j}$ is the set of all incidences $(p,\ell) \in \I$ such that $p \in \ell_{k',j}^{\operatorname{smooth}}$.  It thus suffices to show that
\begin{equation}\label{pell}
 |\I_{k',j}| \ll_{D} |P|^{\frac{2}{3}+\eps} |L|^{\frac{2}{3}} + |P| + |L|
 \end{equation}
for each $k'$ and $j$.

Fix $k'$ and $j$.  Those varieties $\ell \in L$ for which $|\I \cap (P \times \{\ell\})| \leq C_0$ will contribute at most $C_0|L|$ incidences to \eqref{pell}, so we may assume that $|\I \cap (P \times \{\ell\})| > C_0$ for all $\ell \in L$.  By Axiom (ii), this forces the $\ell_{k',j}$ to be distinct.  If we then let $L' = L'_{k',j}$ be the set of all the $\ell_{k',j}$, we can thus identify $\I_{k',j}$ with a subset $\I'$ of $I(P,L')$.
However, by induction hypothesis, Theorem \ref{main} is already known to hold if $k$ is replaced by $k'$ (keeping $d=2k$ fixed).  So, to conclude the argument, it suffices to show that $P, L', \I'$ obey the axioms of Theorem \ref{main}, with $C_0$ replaced by $O_{C_0,D}(1)$. But
Axiom (i) is clear from construction, while Axioms (ii), (iii), (iv), and (v) are inherited from the corresponding axioms for $P, L, \I$.  This closes the induction and proves the lemma.
\end{proof}

The proof of Theorem \ref{main} is now complete.

\begin{remark}  It is reasonable to conjecture that one can set $\eps=0$ in Theorem \ref{main}.  From $k=1$, this can be established using the crossing number inequality \cite{ACNSz, Le} and the Harnack curve theorem \cite{Ha}, following the arguments of Sz\'ekely \cite{szekely}; it is also possible to establish this inequality via the polynomial partitioning method.  For $k=2$, a more careful analysis of the above arguments (using the refinement to the $k=1$ case mentioned above) eventually shows that one can take $A$ to be of the shape $\exp( O_{C_0}(1/\eps) )$; optimizing in $\eps$, one can thus replace the $A |P|^{\eps}$ factor by $\exp( O_{C_0}( \sqrt{\log |P|} ) )$.  However, for $k>2$, the highly inductive nature of the argument causes $A$ to depend on $\eps$ in an iterated exponential manner.
\end{remark}

\begin{remark}
One can construct many examples in which $|I(P,L)|$ is comparable to $|P|^{\frac{2}{3}} |L|^{\frac{2}{3}} + |P| + |L|$ by taking $k$ of the standard point-line configurations in $\R^2$ that demonstrate that the original Szemer\'edi-Trotter theorem (Theorem \ref{szt-thm}) is sharp, and then taking Cartesian products (and increasing the ambient dimension if desired).  It is natural to conjecture that the $\eps$ loss in \eqref{ipl} can be eliminated, but our methods do not seem to easily give this improvement.
\end{remark}

It is possible to drop Axiom (iv), at the cost of making Axiom (v) more complicated.  For any point $p$ on a real algebraic variety $\ell \subset \R^d$, define the \emph{tangent cone} $C_p \ell$ to be the set of all elements in $\R^d$ of the form $\gamma'(0)$, where $\gamma: [-1,1] \to \ell$ is a smooth map with $\gamma(0) = p$.  Note that at a smooth point $p$ of $\ell$, the tangent cone is nothing more than the tangent space $T_p \ell$ (translated to the origin).  However, the tangent cone continues to be well-defined at singular points, while the tangent space is not.

\begin{corollary} The conclusions of Theorem \ref{main} continue to hold if Axiom (iv) is dropped, but the tangent space $T_p \ell$ in Axiom (v)
is replaced by the tangent cone, but where the constant $A$ is now allowed to depend on the ambient dimension $d$ in addition to $k$ and $\eps$.
\end{corollary}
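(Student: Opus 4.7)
The plan is to reduce to Theorem \ref{main} by replacing each (possibly singular) incidence $(p,\ell) \in \I$ with an incidence $(p,V)$ for a subvariety $V \subseteq \ell$ on which $p$ is smooth with a full-dimensional real tangent space, and then verifying Axioms (i)--(v) of Theorem \ref{main} for the resulting configuration. The key new ingredient is the containment $T_p V \subseteq C_p \ell$, which converts the weakened tangent-cone hypothesis of the corollary into the stronger tangent-space hypothesis required by the theorem.

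First, for each $\ell \in L$ I iteratively apply Corollary \ref{decomp} and Proposition \ref{complexity} to the complex variety $\ell_\C$. This produces a finite tree $T_\ell$ of subvarieties $\{V_{\ell,\alpha}\}_{\alpha \in T_\ell}$, with $|T_\ell| = O_{C_0,d,k}(1)$ and each $V_{\ell,\alpha}$ of degree $O_{C_0,d}(1)$, such that every real point of $\ell$ lies in the smooth real locus of some $V_{\ell,\alpha}$, and such that each $V_{\ell,\alpha}$ satisfies conclusion (ii) of Proposition \ref{complexity} (so at every smooth real point of $V_{\ell,\alpha}$ the real tangent space has full dimension equal to $\dim_\C V_{\ell,\alpha}$). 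After fixing a tie-breaking order on $T_\ell$, $\ell$ decomposes as a disjoint union of smooth strata, and each $(p,\ell) \in \I$ acquires a unique \emph{home variety} $V_{p,\ell} \in \{V_{\ell,\alpha}\}$ of some dimension $k'(p,\ell) \in \{0,1,\ldots,k\}$.

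Next, I pigeonhole the incidences by the value of $k'$. For each $k' \in \{0,1,\ldots,k\}$, let $\tilde L_{k'}$ be the set of marked varieties $(\ell,V)$ with $V$ of dimension $k'$ appearing in the decomposition of $\ell$, and let $\tilde \I_{k'}$ consist of those $(p,(\ell,V))$ with $(p,\ell) \in \I$ and $V = V_{p,\ell}$. Then $|\tilde L_{k'}| \ll_{C_0,d,k} |L|$, the natural map $\I_{k'} \to \tilde \I_{k'}$ is a bijection, and I claim $(P,\tilde L_{k'},\tilde \I_{k'})$ satisfies Axioms (i)--(v) of Theorem \ref{main} in ambient dimension $d = 2k \geq 2k'$ with an enlarged constant $C_0' = O_{C_0,d,k}(1)$. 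Axioms (i) and (iv) are built into the construction. For Axiom (ii), the case $\ell = \ell'$, $V \neq V'$ is vacuous by uniqueness of the home variety, while the case $\ell \neq \ell'$ descends from the original Axiom (ii). Axiom (iii) descends similarly from the original Axiom (iii), since for fixed $\ell$ the home variety is determined by the point.

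The crux of the argument is Axiom (v). Suppose $(p,(\ell,V)), (p,(\ell',V')) \in \tilde \I_{k'}$ with $(\ell,V) \neq (\ell',V')$; the Axiom (ii) analysis forces $\ell \neq \ell'$. Since $V \subseteq \ell$ and $p$ is a smooth real point of $V$, any smooth real curve in $V$ through $p$ is also a smooth real curve in $\ell$ through $p$, which yields the containment $T_p V \subseteq C_p \ell$, and similarly $T_p V' \subseteq C_p \ell'$. The corollary's weakened hypothesis then gives $T_p V \cap T_p V' \subseteq C_p \ell \cap C_p \ell' = \{p\}$, which is precisely Axiom (v) for the new configuration. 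Theorem \ref{main} applied to each $\tilde \I_{k'}$ then yields $|\I_{k'}| \ll_{d,k,\eps,C_0} |P|^{2/3+\eps} |L|^{2/3} + |P| + |L|$, and summing over the $k+1$ values of $k'$ completes the proof. The main technical obstacle is designing the home-variety decomposition carefully enough that Axioms (ii) and (iv) hold by construction, so that the tangent-cone containment $T_p V \subseteq C_p \ell$ can do its job in reducing the tangent-cone form of Axiom (v) to the tangent-space form required by Theorem \ref{main}.
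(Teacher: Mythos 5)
Your proposal is correct in substance but organized quite differently from the paper's argument. The paper proves the corollary by a strong induction on $k$ \emph{within the corollary itself}: incidences $(p,\ell)$ at smooth points of $\ell$ are fed directly to Theorem \ref{main} (where the tangent cone coincides with the tangent space, so the modified Axiom (v) reduces to the original one), while the singular incidences are pushed onto the $O_{C_0,k,d}(1)$ lower-dimensional components of $\ell^{\operatorname{sing}}$ supplied by Corollary \ref{decomp}, to which the \emph{induction hypothesis of the corollary} (still in tangent-cone form) is applied. You instead perform the full stratification of each $\ell$ up front, assign each incidence a unique home stratum, pigeonhole by stratum dimension $k'$, and invoke Theorem \ref{main} once per value of $k'$; the containment $T_pV\subseteq C_p\ell$ is then the mechanism that upgrades the tangent-cone hypothesis to the tangent-space hypothesis the theorem demands. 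This buys you a single non-recursive reduction, at the price of having to verify all five axioms for the derived configurations rather than merely observing that they are ``inherited.''

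The one step you should tighten is the verification of Axiom (ii), where you work with \emph{marked} varieties $(\ell,V)$: Theorem \ref{main} is stated for a collection of honest varieties, and two distinct $\ell\neq\ell'$ may share an underlying stratum $V=V'$, or a single underlying $V$ may acquire different parents at different points. Your own tangent-cone containment rescues this: if $k'\geq 1$ and $(p,\ell),(p,\ell')\in\I$ with $\ell\neq\ell'$ and $V_{p,\ell}=V_{p,\ell'}=V$, then $T_pV\subseteq C_p\ell\cap C_p\ell'=\{p\}$, a contradiction, so de-marking does not collapse incidences; and for genuinely distinct strata $V\neq V'$ the same containment forces $T_pV\cap T_pV'=\{p\}$ at every doubly-incident point, so such points are isolated in $V\cap V'$ and their number is $O_{C_0,d,k}(1)$ by a B\'ezout-type bound, which is what Axiom (ii) with the enlarged $C_0'$ requires. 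With that addendum (and the routine reduction to the regime \eqref{regime} so the $O(|P|+|L|)$ terms can be absorbed), your argument is complete at the same level of rigor as the paper's sketch.
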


\begin{proof}  (Sketch) We perform strong induction on $k$, assuming that the claim has already been proven for smaller $k$.
For those incidences $(p,\ell)$ in $\I$ for which $p$ is a smooth point of $\ell$, one can apply Theorem \ref{main} to get a good bound, so we may restrict attention to those incidences in which $p$ is a singular point of $\ell$.  But then we can use Corollary \ref{decomp} to cover the singular portion of $\ell$ by $O_{C_0,k,d}(1)$ irreducible components in $\ell$ of dimension at most $k-1$ and degree $O_{C_0,k,d}(1)$.  Applying the induction hypothesis to each of these components (noting that Axioms (i)-(iii) and the modified Axiom (v) are inherited by these components, increasing $C_0$ if necessary) we obtain the claim.
\end{proof}

\appendix

\section{Connected components of real semi-algebraic sets}

In this appendix we give some standard bounds on the number of connected components of various real semi-algebraic sets.  We first bound the number of connected components cut out by a hypersurface $\{ P=0\}$ of bounded degree. This result was proved by independently by Oleinik and Petrovsky \cite{OP}, Milnor \cite{Mi}, and Thom \cite{Th}. (They proved the stronger result that under the conditions of the theorem below the sum of the Betti numbers of $X$ is $O_d(D^d)$.) Recently a more general bound was presented by Barone and Basu \cite{basu}. They gave tight estimates on the dependence on the various parameters which are hidden in the big-Oh notation in Theorem \ref{pad}.

\begin{theorem}\label{conn}  Let $P: \R^d \to \R$ be a polynomial of degree at most $D$ for some $D \geq 1$ and $d \geq 0$.  Then the set $\{ x \in \R^d: P(x) \neq 0 \}$ has $O_d(D^d)$ connected components.
\end{theorem}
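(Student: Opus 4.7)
Plan:

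This is a classical result, proven independently by Oleinik-Petrovsky \cite{OP}, Milnor \cite{Mi}, and Thom \cite{Th}; I will follow their Morse-theoretic approach. The idea is to associate each connected component of $\{P \neq 0\}$ to a critical point of a polynomial system, and then to bound the number of critical points using B\'ezout's theorem.

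First, I would introduce the auxiliary function
\[
g(x) := \|x - x_0\|^2 + \frac{1}{P(x)^2}
\]
on the open set $\{P \neq 0\}$, where $x_0 \in \R^d$ is a generically chosen base point. On each connected component $C$ of $\{P \neq 0\}$, the function $g$ blows up as $x$ approaches $\partial C \subset \{P = 0\}$ (because $1/P^2 \to +\infty$) and as $\|x\| \to \infty$; hence every sublevel set $\{x \in C : g(x) \le M\}$ is compact, so $g$ attains its infimum at some interior point $x_C \in C$, which must be a critical point of $g$. Distinct components yield distinct points $x_C$, so it suffices to bound the total number of critical points of $g$.

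Second, I would convert the equation $\nabla g(x) = 0$ into a polynomial system. Clearing denominators gives
\[
P(x)^3\,(x - x_0) \;=\; \nabla P(x),
\]
a system of $d$ polynomial equations of degree at most $3D+1$ in $d$ unknowns. For generic $x_0$, and possibly after a small perturbation $\tilde P$ of $P$ within polynomials of degree $\le D$, the solution set of this system is zero-dimensional, and B\'ezout's theorem then gives at most $(3D+1)^d = O_d(D^d)$ solutions in $\C^d$. Each component of $\{P \neq 0\}$ contributes at least one real solution, yielding the desired bound.

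The main obstacle is the transversality and perturbation step. On the one hand, one needs $x_0$ and $P$ in sufficiently general position for the B\'ezout count to apply, which is handled by a standard application of Sard's theorem. On the other hand, perturbing $P$ may \emph{reduce} the number of components of $\{P \neq 0\}$ when $\{P=0\}$ is singular (for instance, perturbing $xy$ to $xy-\eps$ in $\R^2$ reduces the component count from four to three). Fortunately, the total deficit is controlled by the number of singular points of $\{P=0\}$, which is itself $O_d(D^d)$ by B\'ezout applied to the system $P = \partial_{x_1} P = \cdots = \partial_{x_d} P = 0$, so the final bound absorbs this loss. A conceptually cleaner alternative, which bypasses the perturbation step entirely, is to combine Milnor's bound on the sum of the Betti numbers of the real algebraic set $\{P = 0\}$ with Alexander duality on the compactification $S^d$ to estimate $b_0(\R^d \setminus \{P = 0\})$ directly.
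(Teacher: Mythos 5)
Your argument is correct in its essentials, but it is a genuinely different route from the paper's. The paper proves Theorem \ref{conn} by induction on $d$: it restricts to a cube $Q$, disposes of the components meeting $\partial Q$ by the induction hypothesis in dimension $d-1$, and captures each interior component by a critical point of the linearly perturbed polynomial $x \mapsto P(x) - u\cdot x$ for a small generic $u$, so that B\'ezout is applied to the system $\nabla P = u$ of degree $D-1$, giving $(D-1)^d$. You instead use the single proper function $g(x)=\|x-x_0\|^2+P(x)^{-2}$, which globalizes the argument: no induction, no cube, and unbounded components and components abutting $\{P=0\}$ are handled in one stroke, at the cost of a higher-degree system ($(3D+1)^d$ rather than $(D-1)^d$ — still $O_d(D^d)$). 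Three remarks. First, the perturbation of $P$ itself is unnecessary and is the weakest part of your write-up: genericity of $x_0$ alone already makes the fiber $\{x: P(x)\neq 0,\ P^3(x-x_0)=\nabla P\}$ finite (the map $x\mapsto x - P(x)^{-3}\nabla P(x)$ is a morphism on $\C^d\setminus\{P=0\}$ whose generic fiber is finite or empty, and a proper complex subvariety of $\C^d$ cannot contain $\R^d$), so the discussion of the ``deficit'' caused by perturbing $P$ — whose claimed control by singular points of $\{P=0\}$ is not obviously correct — should simply be deleted. Second, since the cleared system may still have positive-dimensional complex components inside $\{P=\nabla P=0\}$, you should invoke the affine B\'ezout inequality bounding the number of \emph{isolated} solutions (as the paper does in Theorem \ref{pad}, citing \cite{heintz}); your real minima lie off $\{P=0\}$ and are isolated for generic $x_0$, so they are counted. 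Third, the proposed Alexander-duality alternative presupposes the Oleinik--Petrovsky--Milnor--Thom bound on the Betti numbers of $\{P=0\}$, i.e.\ the stronger theorem one is trying to reprove, so it should be presented as a citation rather than as a proof.
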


\begin{proof}  We proceed by induction on $d$.  The case $d=0$ is trivial, so suppose that $d \geq 1$ and that the claim is already proven for $d-1$.  By a limiting argument, it suffices to show that for any cube $Q$ in $\R^d$, the number of components of $\{ x \in Q: P(x) \neq 0 \}$ has $O_d(D^d)$ connected components, uniformly in $Q$.  The number of components that intersect the boundary $\partial Q$ of $Q$ is $O_d(D^{d-1})$ by the induction hypothesis, so it suffices to control the number of components that lie completely in the interior of $Q$.

Let $C_1,\ldots,C_m$ be the components of $\{ x \in Q: P(x) \neq 0 \}$ that avoid $\partial Q$; our task is to show that $m = O_d(D^d)$.  Observe that each $C_i$ is an open set, with $P$ non-zero in $C_i$ and vanishing on the boundary of $C_i$.  By the continuity of $P$, we can thus find a compact subset $K_i$ of $C_i$ such that on $K_i$, at least one of the maximum or minimum of $P$ is attained only in the interior of $K_i$ (i.e. one either has $\sup_{x \in K_i} P(x) > \sup_{x \in \partial K_i} P(x)$ or $\inf_{x \in K_i} P(x) < \inf_{x \in \partial K_i} P(x)$). In particular, $P$ has at least one critical point in the interior of $K_i$, and so $\nabla P$ has at least one zero in the interior of $K_i$, and so the preimage $\nabla P^{-1}(\{0\})$ of $0$ under the gradient map $\nabla P: \R^d \to \R^d$ has cardinality at least $m$.

To avoid degeneracy issues we now perform a perturbation argument.  Let $u \in \R^d$ be a sufficiently small vector, then we see that the function $x \mapsto P(x) - u \cdot x$ also has the property that at least one of the maximum or minimum of this function is attained only in the interior of $K_i$.  In particular, $\nabla P - u$ has at least one zero in the interior of $K_i$, and so the preimage $\nabla P^{-1}(\{u\})$ of $u$ under $\nabla P$ also has cardinality at least $m$.

Now take $u$ to be a sufficiently small \emph{generic} vector.  As the domain and range of the polynomial map $\nabla P$ have the same dimension, we see that $\nabla P^{-1}(\{u\})$ is finite for generic $u$ (even when $P$ is extended to the complex domain $\C^d$).  As such, we can invoke B\'ezout's theorem (considering the complex zeros) and conclude that the number of solutions is in fact bounded by $(D-1)^d = O(D^d)$ for generic $u$, as each component of $\nabla P$ has degree at most $D-1$.  The claim follows.
\end{proof}

A more complicated version of the above argument also works on general algebraic sets:

\begin{theorem}\label{pad}  Let $d \geq k \geq 0$, let $V$ be a $k$-dimensional real algebraic set in $\R^d$ of complexity at most $M$, let $W$ be another real algebraic set in $\R^d$ of complexity at most $M$, and let $P: \R^d \to \R$ be a polynomial of degree at most $D$, for some $D \geq 1$.  Then the set $\{ x \in V \backslash W: P(x) \neq 0 \}$ has $O_{M,d,k}(D^{k})$ connected components.
\end{theorem}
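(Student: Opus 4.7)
The plan is to adapt the Oleinik--Petrovsky--Milnor--Thom argument of Theorem \ref{conn}, handling the excised set $W$, the singular locus of $V$, and the fact that the exponent is $k$ rather than the ambient $d$ by a nested induction.

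First reduce to $W=\emptyset$. Writing $W=\{Q_1=\cdots=Q_m=0\}$ with $m\leq M$ and $\deg Q_i\leq M$, we have $V\setminus W=\bigcup_{i=1}^{m}\{x\in V:Q_i(x)\neq 0\}$, so
\[
\{x\in V\setminus W:P(x)\neq 0\}=\bigcup_{i=1}^{m}\{x\in V:P(x)Q_i(x)\neq 0\}.
\]
The number of connected components of a union of open sets is at most the sum of the component counts of the pieces, so after losing a factor of $m\leq M$ it suffices to bound the number of components of $\{x\in V:R(x)\neq 0\}$ by $O_{M,d,k}(\deg(R)^{k})$ for a single polynomial $R$ of degree at most $D+M$.

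Next induct on $k$. The base case $k=0$ is trivial, since $V$ is then a finite set. For $k\geq 1$, decompose $V=V^{\operatorname{smooth}}\cup V^{\operatorname{sing}}$; iterating Proposition \ref{most} covers $V^{\operatorname{sing}}$ by $O_{M,d}(1)$ real algebraic varieties of dimension at most $k-1$ and complexity $O_{M,d}(1)$, for which the induction hypothesis yields $O_{M,d,k}(D^{k-1})$ components. To handle the smooth part, mimic Theorem \ref{conn}: by a limiting argument restrict to a large cube $Q\subset\R^d$, with components meeting $\partial Q$ controlled by applying the inductive hypothesis to the $(k-1)$-dimensional slices $V\cap F$ as $F$ ranges over the $2d$ faces of $Q$. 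For each component $C$ compactly contained in the interior of $Q$, a generic sufficiently small $u\in\R^d$ makes $R-u\cdot x$ attain a strict extremum in the interior of $C$, producing a critical point of $(R-u\cdot x)|_{V^{\operatorname{smooth}}}$ inside $C$; thus the count of such interior components is bounded by the number of smooth $x\in V$ at which $\nabla R(x)-u$ is normal to $T_x V$.

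The main obstacle is showing that for generic $u$ this critical-point count is $O_{M,d,k}(D^k)$, rather than the $O_{M,d}(D^d)$ one would get from a naive B\'ezout bound on the full Lagrange-multiplier system in $\R^d\times\R^{d-k}$. To exploit $\dim V=k<d$, pass to a $k$-dimensional complete intersection $V'=\{P_1=\cdots=P_{d-k}=0\}$ cut out by $d-k$ generic linear combinations of the defining polynomials of $V$, so that $V\subseteq V'$, $\deg P_j\leq M$, and $V'$ has complexity $O_M(1)$. At a smooth point of $V'$ the Lagrange condition $\nabla R(x)-u\in\operatorname{span}(\nabla P_1(x),\ldots,\nabla P_{d-k}(x))$ is equivalent to the vanishing of all $(d-k+1)\times(d-k+1)$ minors of the $(d-k+1)\times d$ Jacobian $[\nabla(R-u\cdot x);\nabla P_1;\ldots;\nabla P_{d-k}]$; for generic $u$, an appropriately chosen set of $k$ such minors (each of degree $O_M(D)$), together with the $d-k$ equations $P_j=0$ (each of degree $\leq M$), cut out the critical locus as a zero-dimensional subset of $\C^d$, and B\'ezout yields at most $M^{d-k}\cdot O_M(D)^{k}=O_{M,d,k}(D^k)$ critical points, closing the induction. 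Alternatively, the same $D^k$ bound may be obtained by directly invoking the refined Betti-number estimate of Barone and Basu \cite{basu} cited just after the theorem.
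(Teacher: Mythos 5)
Your overall strategy is the same as the paper's: excise the singular locus by induction on dimension, run the Oleinik--Petrovsky--Milnor--Thom perturbation argument cube by cube, and bound the resulting interior critical points of $x \mapsto R(x) - u\cdot x$ on $V$ by a B\'ezout count whose exponent is $k$ rather than $d$. Your reduction of $W$, writing $V\setminus W$ as the union of the sets $\{x\in V: Q_i(x)\neq 0\}$ and replacing $P$ by $PQ_i$, is clean and in fact slightly more robust than the paper's device (which multiplies $P$ by a polynomial vanishing on $W$ but not on $V$, and needs to first establish that all components are full-dimensional in $V$). The genuine divergence is in the final counting step: the paper forms the Zariski closure $NV$ of the normal bundle over the smooth locus, argues it has complexity $O_{M,d,k}(1)$, and applies the affine B\'ezout inequality to its intersection with a generic translate of the graph of $\nabla P$; you instead place $V$ inside a complete intersection $V'=\{P_1=\cdots=P_{d-k}=0\}$ and encode the Lagrange condition by minors of the Jacobian.

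There are two gaps in that step. First, you never address the discrepancy between real and complex tangent spaces: at a smooth point $p$ of the complex variety $V$ the real tangent space $T_pV\cap\R^d$ may have dimension strictly less than $k$, in which case a real interior extremum of $R-u\cdot x$ on a component need not satisfy the complex Lagrange condition your minors encode, and that component goes uncounted. The paper handles this with Proposition \ref{complexity} (either the real points are covered by lower-dimensional varieties, which is absorbed into the induction, or every real tangent space is $k$-dimensional); you need the same dichotomy. Second, the assertion that for generic $u$ your $k$ chosen minors together with $P_1=\cdots=P_{d-k}=0$ cut out a zero-dimensional set is false in general: at any point of $V$ where $\nabla P_1,\ldots,\nabla P_{d-k}$ fail to be linearly independent, the full $(d-k+1)\times d$ Jacobian automatically has rank at most $d-k$, so \emph{all} of its $(d-k+1)\times(d-k+1)$ minors vanish there regardless of $u$. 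Generic linear combinations of the defining polynomials need not present $V$ as a scheme-theoretic complete intersection along all of $V$, so this degeneracy locus can be positive-dimensional, the system acquires excess components, and a naive B\'ezout count does not apply. The argument is repairable --- the degeneracy locus is a subvariety of $V$ of dimension at most $k-1$ and bounded complexity, so it can be excised and fed to the induction, and the remaining critical points are isolated solutions, to which the isolated-points form of the affine B\'ezout inequality \cite{heintz} applies --- but as written the zero-dimensionality claim is the step that fails, and it is precisely the difficulty that the paper's normal-bundle formulation is designed to avoid.
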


\begin{proof}  We allow all implied constants to depend on $M$, $d$, and $k$.  To give some additional flexibility we allow $P$ to have degree $O(D)$, rather than just having degree at most $D$.

We induct on the quantity $k+d$.  The case $k=0$ follows from Lemma \ref{cont}, so suppose that $k \geq 1$ and the claim has already been proven for smaller values of $k+d$.  By Lemma \ref{cont} we may assume $V$ to be irreducible.    We may assume that $P$ is non-vanishing on $V$, as the claim is trivial otherwise.

By Proposition \ref{complexity} and the induction hypothesis we may assume that $V^{\operatorname{smooth}}_\R$ is $k$-dimensional, and by Proposition \ref{most} and the induction hypothesis we can handle the contribution of the singular points of $V$.    Thus, by enlarging $W$ if necessary, we may assume without loss of generality that all real points of $V \backslash W$ are smooth.

In particular, all connected components of $\{ x \in V \backslash W: P(x) \neq 0 \}$ are now full dimensional in $V$.  This has the following consequence: if $P'$ is any multiple of $P$ that is still of degree $O(D)$ and still not vanishing identically on $V$, and $\{ x \in V \backslash W: P'(x) \neq 0 \}$ is known to have $O(D^k)$ components, then $\{ x \in V \backslash W: P(x) \neq 0 \}$ also has $O(D^k)$ components (since any component of the latter contains a component of the former).  As such, we now have the freedom to multiply $P$ at will by any polynomial of degree $O(D)$ that does not vanish identically on $V$.  In particular, by using polynomials on the bounded complexity set $W$ that do not vanish on $V$, we may assume that $P$ vanishes on $W$, at which point we can remove the role of $W$; thus we now assume that $P$ vanishes at all singular points of $V$, and our task is to show that $X=\{ x \in V: P(x) \neq 0 \}$ has $O(D^k)$ components.

As $V$ has complexity $O(1)$, it is not difficult to show (for instance, using an ultralimit compactness argument as in \cite{bgt}) that the (Zariski closure of the) normal bundle
$$NV := \{ (x,v) \in \R^d \times \R^d: x \in V^{\operatorname{smooth}}_\R; v \perp T_x V^{\operatorname{smooth}}_\R \}$$
of $V$ has complexity $O(1)$ as well.

Now we repeat the argument used to prove Theorem \ref{conn}.  It suffices to show for each cube $Q$ that the set
$$ \{ x \in V \cap Q: P(x) \neq 0 \} $$
has $O(D^k)$ components.  The components that touch the boundary of the cube can again be handled by the induction hypothesis, so we may restrict attention to the components that are in the interior of the cube.  By arguing as in the proof of Theorem \ref{conn}, it suffices to show that for generic $u \in \R^d$, the function $x \mapsto P(x)-u \cdot x$ on $V^{\operatorname{smooth}}_\R \cap Q$ has $O(D^k)$ interior critical points.

By the method of Lagrange multipliers, a point $x \in V^{\operatorname{smooth}}_\R$ is a critical point of $x \mapsto P(x)-u \cdot x$ if and only if the point $(x, \nabla P(x)-u)$ lies in $NV$.  On the other hand, for generic $u$, the set $\{ x \in V: (x, \nabla P(x)-u) \in NV \}$ (which is the intersection of the codimension $k$ set $NV$ with a generic translate of the graph of $\nabla P$ on the dimension $k$ set $V$) is finite.  As $\nabla P-u$ has degree $O(D)$ and $NV$ has complexity $O(1)$, an application of B\'ezout's theorem\footnote{More precisely, one can invoke here the affine B\'ezout inequality \cite{heintz}; see also \cite{rusek}, \cite{cgh}, \cite{schmid} for alternate proofs of this inequality.}  shows that this set has cardinality $O(D^k)$, and the claim follows.
\end{proof}


\begin{thebibliography}{10}

\bibitem{agshar} P. Agarwal and M. Sharir, Applications of a new space-partitioning technique, Discrete and Computational Geometry,
Vol. 9 Issue 1 (1993)11--38.

\bibitem{ACNSz} M. Ajtai, V. Chv\'atal, M. Newborn, E. Szemer\'edi, Crossing-free subgraphs, Theory and Practice of Combinatorics. North-Holland Mathematics Studies. (1982) 60, 9--12.

\bibitem{aronov}
B. Aronov, V. Koltun, and M. Sharir, Incidences between points and
circles in three and higher dimensions, Discrete \& Computational
Geometry, 33 (2005), 185--206.

\bibitem{basu} S. Barone and S. Basu,  Refined bounds on the number of connected components of sign conditions on a variety, Discrete \& Computational Geometry,  Doi: 10.1007/s00454-011-9391-3 

also in: arXiv:1104.0636v3 [math.CO]

\bibitem{BPR} S. Basu, R. Pollack, M.-F. Roy, On the number of
cells defined by a family of polynomials on a variety
Mathematika, 43 (1996) 120--126.

\bibitem{BCT} J. Bennett, A. Carbery, and T. Tao, On the multilinear restriction and Kakeya conjectures,
Acta Math. 196 (2006) 261--302.

\bibitem{bgt} E. Breuillard, B. Green, T. Tao, Approximate subgroups of linear groups, Geometric and Functional Analysis 21, no. 4, 774--819.	

also in:arXiv:1005.1881v1 [math.GR]

\bibitem{BMP} P. Brass, W. Moser, J. Pach, Research problems in discrete geometry, Springer, 2005.

\bibitem{cgh}
L. Caniglia, A. Galligo, J. Heintz, Some new effectivity bounds in computational geometry, in: Applied Algebra, Algebraic Algorithms and
Error-Correcting Codes, Proc. 6th Intern. Conf. AAECC-6, Lecture Notes
in Computer Science 357, Springer 1989, 131-151

\bibitem{Ch} M.C. Chang, Additive and Multiplicative Structure in Matrix Spaces, Combinatorics, Probability and Computing (2007), 16: 219--238

\bibitem{E1} Gy. Elekes, Sums versus products in Number Theory, Algebra and Erd\H os Geometry.
in: Paul Erd\H os and his Mathematics II, Bolyai Math. Soc. Stud 11 Budapest, 2002, 241--290.

\bibitem{E2} Gy. Elekes, On the number of sums and products, Acta Arith. \textbf{81}, 1997, 365--367.

\bibitem{E3} Gy. Elekes, On the dimension of finite pointsets II. "Das Budapest Programm" 2007. 	arXiv:1109.0636v1 [math.CO]

\bibitem{elekes}
Gy. Elekes and Cs. D. T\'oth, Incidences of not-too-degenerate
hyperplanes, in Proceedings of the twenty-first annual symposium on
Computational geometry (SCG '05). ACM, New York, NY, USA, 16--21.

\bibitem{gowers} The Princeton Companion to Mathematics
(Edited by Timothy Gowers. June Barrow-Green and Imre Leader, associate editors) Princeton University Press (2008)

\bibitem{griffiths}
P. Griffiths, J. Harris, Principles of algebraic geometry. Reprint of the 1978 original. Wiley Classics Library. John Wiley \& Sons, Inc., New York, 1994.

\bibitem{GK-joints} L. Guth and N.H. Katz, Algebraic Methods in Discrete Analogs of the Kakeya Problem, 
Adv. Math. 225(2010) 2828--2839.

also in:arXiv:0812.1043 [math.CO]

\bibitem{GK} L. Guth and N.H. Katz, On the Erd\H os distinct distance problem in the plane, arXiv:1011.4105v1 [math.CO]

\bibitem{Ha} C.G.A. Harnack, \"Uber Vieltheiligkeit der ebenen algebraischen Curven, Math. Ann. 10 (1876), 189--199.

\bibitem{harris}
J. Harris, Algebraic geometry. A first course. Corrected reprint of the 1992 original. Graduate Texts in Mathematics, 133. Springer-Verlag, New York, 1995.

\bibitem{hartshorne} R. Hartshorne, Algebraic geometry. Graduate Texts in Mathematics, No. 52. Springer-Verlag, New York-Heidelberg, 1977. xvi+496 pp.

\bibitem{heintz}
J. Heintz, Definability and fast quantifier elimination over algebraically
dosed fields, Theoret. Comput. Sci. \textbf{24} (1983), 239--277.


\bibitem{iosevich}
A. Iosevich, H. Jorati, and I. {\L}aba, Geometric incidence theorems via
Fourier analysis, Trans. Amer. Math. Soc. \textbf{361} (2009), no. 12,
6595--6611.


\bibitem{kaplan}
H. Kaplan, J. Matou\v{s}ek, M. Sharir, Simple Proofs of Classical Theorems in Discrete Geometry via the Guth--Katz Polynomial Partitioning Technique, arXiv:1102.5391

\bibitem{kst}
T. K\~ov\'ari, V. S\'os, P. Tur\'an, On a problem of K. Zarankiewicz, Colloq. Math. 3 (1954), 50-–57.

\bibitem{laba}
I. {\L}aba and J. Solymosi, Incidence Theorems for Pseudoflats, Discrete
Comput. Geom. \textbf{37}, 2 (2007), 163--174.

\bibitem{Le} T. Leighton,  Complexity Issues in VLSI. Foundations of Computing Series. Cambridge, MA: MIT Press. (1983).

\bibitem{Ma} J. Matou\v{s}ek, Lectures on Discrete Geometry. Springer-Verlag, Graduate Texts in Mathematics, 2002.

\bibitem{Mi} J. Milnor, On the Betti numbers of real varieties,
Proc. AMS \textbf{15}, (1964) 275--280.

\bibitem{mum0}
D. Mumford, Varieties defined by quadratic equations, 1970 Questions on Algebraic
Varieties (C.I.M.E., III Ciclo, Varenna, 1969) pp. 29–100 Edizioni Cremonese, Rome.

\bibitem{Mumford}
D. Mumford, The red book of varieties and schemes. Second, expanded edition. Includes the Michigan lectures (1974) on curves and their Jacobians. With contributions by Enrico Arbarello. Lecture Notes in Mathematics, 1358. Springer-Verlag, Berlin, 1999. 

\bibitem{OP} O. A. Oleinik, I. B. Petrovskii, On the topology of
real algebraic surfaces, Izv. Akad. Nauk SSSR \textbf{13},  (1949)
389--402.

\bibitem{pach}
J. Pach and M. Sharir, On the number of incidences between points and
curves, Combinatorics, Probability and Computing \textbf{7} (1998), 121--127.

\bibitem{pach2}
J. Pach, M. Sharir, Repeated angles in the plane and related problems,
  J. Combin. Theory Ser. A 59 (1992) 12--22.

\bibitem{PS} J. Pach and M. Sharir,
Combinatorial Geometry and its Algorithmic Applications: The Alcala Lectures,
Lecture notes, Alcala, Spain, 2006. in Amer. Math. Soc. Press., Providence, RI, 2009.

\bibitem{roy} 
M.-F. Roy, N. Vorobjov, The complexification and degree of a semi-algebraic set, Math. Z. 239 (2002), no. 1, 131–-142. 

\bibitem{rusek}
K. Rusek, T. Winiarski, Polynomial Automorphisms of $\C^n$, Universitatis Iacellonicae Acta Mathematica (1984), 143-149

\bibitem{schmid}
J. Schmid, On the affine B\'ezout inequality, Manuscripta Mathematica 88 (1995), Number 1, 225-232

\bibitem{sharir}
M. Sharir and E. Welzl, Point-line incidences in space, Combinatorics,
Probability and Computing 13 (2004) 203--220.

\bibitem{sototh} J. Solymosi and Cs. T\'oth, Distinct Distances in Homogeneous Sets in Euclidean Space. Discrete and Computational Geometry (2006) 35(4): 537-549 

\bibitem{sovu} J. Solymosi and V. Vu, Distinct distances in homogeneous sets. In Proceedings of the nineteenth annual symposium on Computational geometry (SCG '03). ACM, New York, NY, USA, 2003, 104--105. 

\bibitem{solymosi}
J. Solymosi, G. Tardos, On the number of $k$-rich transformations, in
Proceedings of the 23th Annual Symposium on Computational Geometry,
(SoCG 2007), ACM, New York, 2007, 227--231.

\bibitem{SST}
J. Spencer, E. Szemer\'edi, W. T. Trotter, Unit distances in the Euclidean plane, in: Graph
Theory and Combinatorics (B. Bollob\'as, ed.), Academic Press, London, 1984, 293--308.

\bibitem{ST} A.H. Stone and J.W. Tukey, Generalized sandwich theorems, Duke Math. Jour. \textbf{9} (1942), 356--359.

\bibitem{springer} Encyclopaedia of Mathematics (Edited by M. Hazewinkel) 2002 Springer-Verlag Berlin Heidelberg New York
ISBN 1-4020-0609-8  http://eom.springer.de/

\bibitem{szekely}
L. Sz\'ekely, Crossing numbers and hard Erd\H{o}s problems in discrete geometry,
{Combin. Probab. Comput.}  \textbf{6} (1997), 353--358.

\bibitem{Szek} L. Sz\'ekely, Erd\H os on unit distances and the Szemer\'edi-Trotter theorems, Paul Erd\H os and his Mathematics, Bolyai Series Budapest, J. Bolyai Math. Soc. Springer 11 (2002), 649-666.

\bibitem{SzT} E. Szemer\'edi and W.T. Trotter, Extremal problems in discrete geometry, Combinatorica
\textbf{3} (1983) 381--392.

\bibitem{Ta} T. Tao, The sum-product phenomenon in arbitrary rings, Contrib. Discrete Math. 4:2 (2009), 59–-82

\bibitem{taylor}
J. Taylor, Several complex variables with connections to algebraic geometry and Lie groups. Graduate Studies in Mathematics, 46. American Mathematical Society, Providence, RI, 2002.

\bibitem{Th} R. Thom, Sur l'homologie des vari\'et\'es alg\'ebriques r\'eelles, Differential and Combinatorial Topology, (Symposium in Honor of Marston Morse), Ed.
S.S. Cairns, Princeton Univ. Press,  (1965) 255--265.

\bibitem{To} Cs.D. T\'oth, The Szemer\'edi-Trotter Theorem in the Complex Plane, arXiv:math/0305283v3 [math.CO]

\bibitem{Wa} H.E. Warren, Lower bounds for approximation by nonlinear manifolds,
Trans. AMS 133 (1968) 167--178.

\bibitem{wolff}
T. Wolff, Recent work connected with the Kakeya problem,  Prospects in mathematics (Princeton, NJ, 1996), 129--162, Amer. Math. Soc., Providence, RI, 1999.

\bibitem{zahl} J. Zahl, An improved bound on the number of point-surface incidences in three dimensions, 	arXiv:1104.4987v3 [math.CO]

\end{thebibliography}
\end{document}